\newcommand{\BB}{\mathbb{M}}
\newcommand{\QQ}{\mathbb{Q}}
\newcommand{\PP}{\mathbb{P}}
\newcommand{\ve}{{\varepsilon}}
\newcommand{\ben}{\begin{equation}}
\newcommand{\een}{\end{equation}}
\newcommand{\benn}{\begin{equation*}}
\newcommand{\eenn}{\end{equation*}}
\newcommand{\R}{\mathbf{R}}
\newcommand{\N}{\mathbf{N}}
\newcommand{\divv}{\operatorname{div}}
\newcommand{\Ltwo}{L^2}
\newcommand{\dt}{{\frac{d}{dt}}}
\newcommand{\e}{\varepsilon}
\newcommand{\ap}{\mathcal{A}}
\newcommand{\oms}{{\Omega^\star}}
\newcommand{\us}{u^\star}
\newcommand{\bonen}{\beta_1^n}
\newcommand{\btwon}{\beta_2^n}
\newcommand{\bonem}{\beta_1^m}
\newcommand{\btwom}{\beta_2^m}
\newcommand{\Omr}{\Omega^{\text{ref}}} 
\newcommand{\Oma}{\Omega_{\text{ma}}}
\newtheorem{theorem}{Theorem}
\newtheorem{lemma}{Lemma}
\newtheorem{remark}{Remark}
\newtheorem{definition}{Definition}
\newtheorem{assumption}{Assumption}
\newtheorem{algorithm}{Algorithm}
\newtheorem{proposition}{Proposition}
\begin{document}

\title{Shape optimization of an electric motor subject to nonlinear magnetostatics}

\author[1]{P. Gangl \thanks{peter.gangl@dk-compmath.jku.at}}
\author[1,2]{U. Langer \thanks{ulrich.langer@ricam.oeaw.ac.at}}
\author[3]{A. Laurain \thanks{laurain@math.tu-berlin.de}}
\author[3]{H. Meftahi \thanks{meftahi@math.tu-berlin.de}}
\author[4]{K. Sturm \thanks{ kevin.sturm@uni-due.de}}
\affil[1]{Johannes Kepler University, Altenberger Stra\ss e 69, A-4040 Linz, Austria}
\affil[2]{RICAM, Altenberger Stra\ss e 69, A-4040 Linz, Austria}
\affil[3]{Technical University of Berlin, Str. des 17. Juni 136, 10623 Berlin}
\affil[4]{Universit\"at Duisburg-Essen, Fakult\"at f\"ur Mathematik}
\providecommand{\keywords}[1]{\textbf{\textbf{Keywords:}} #1}

\date{}
\maketitle

\begin{abstract}
The goal of this paper is to improve the performance of an electric motor by modifying the geometry of a specific part of the iron core of its rotor. To be more precise, the objective is to smooth the rotation pattern of the rotor. A shape optimization problem is formulated by introducing a tracking-type cost functional to match a desired rotation pattern. The magnetic field generated by permanent magnets is modeled by a nonlinear partial differential equation of magnetostatics. The shape sensitivity analysis is rigorously performed for the nonlinear problem by means of a new shape-Lagrangian formulation adapted to nonlinear problems. 
\end{abstract}
\keywords{electric motor, shape optimization, magnetostatics, nonlinear partial differential equations.} 
\section{Introduction}

Advanced shape optimization techniques have become a key tool for the design of industrial structures. In the automotive and aeronautic industries, for instance, the reduction of the drag or of the noise are important features which can be reduced by changing the design of the vehicles. In general, when considering a complex mechanical assemblage, it is often possible to optimize the geometry of certain pieces to improve the overall performance of the object. In the industrial sector, the shape optimization of electrical machines is the most economical approach to improve their efficiency and performance.
Shape optimization problems are usually formulated as the minimization of a given cost function, typical examples being the weight or the compliance for elastic systems. The most interesting and challenging problems of these type have linear or nonlinear partial differential equations constraints; see, for instance, \cite{AmLa,DZ2, MR2523581, HaslingerMaekinen2003, HaslingerNeittaanmaeki1996, MR2653723, MR2806573, ik, LaporteLeTallec2003, COV:9228529,MR3011966,LaurainSturm,o,dlr86439,SZ} and the references therein.

In this work, a shape optimization approach is used to improve the design of an electric motor in order to match a desired smoother rotation pattern. As a model problem, we consider an interior permanent magnet (IPM) brushless electric motor consisting of a rotor (inner part) and a stator (outer part) separated by a thin air gap and containing both an iron core; see Fig. \ref{fig1} for a description of the geometry. The rotor contains permanent magnets. The coil areas are located on the inner part of the stator. In general, inducing current in the coils initiates a movement of the rotor due to the interaction between
the magnetic fields generated by the electric current and by the magnets. In our application, we are only interested in the magnetic field $B$ for a fixed rotor position without any current induced. Since the magnetic properties of copper in the coils and of air are similar, we model these regions as air regions, too. We refer the reader to \cite{ChoiEtAl2011, HongKwackMin2010, MiyagiEtAl2011, MiyagiTakahashiYamada2010} for other approaches to the design optimization of IPM electric motors and to \cite{6556323} for a special method of modeling IPM motors using radial basis functions.

Due to practical restrictions, only some specific parts of the geometry can be modified. In our application, we identify a design subregion $\Omega$ of the iron core of the rotor subject to the shape optimization process. Our objective is to modify $\Omega$ in order to match a desired rotation pattern as well as possible. Practically this is achieved by tracking a certain desired profile of the magnetic flux density, which is done by reformulating the problem as a shape optimization one by introducing a tracking-type cost function.

The shape optimization of $\Omega$ has been considered in \cite{GanglLanger2014} from the point of view of the topological sensitivity \cite{MR3013681,MR1691940}. However, the derivation of the so-called {\it topological derivative} for nonlinear problems is formal since the mathematical theory for these problems is still in its early stages; 
see \cite{Alain2013,MR2356899,MR2541192} for a few results in this direction. Moreover, the drawback of the topological derivative is that it usually creates geometries with jagged contours.

In this paper, we focus on the shape optimization of the design domain $\Omega$ by means of the {\it shape derivative}, which, contrarily to the topological derivative, proceeds by smooth deformation of the boundary of a reference design. In this way, the optimal shape has a smooth boundary provided that the numerical algorithm is carefully devised. Computing the shape derivative for problems depending on linear partial differential equations is a well-understood topic; see for instance \cite{DZ2,MR2512810,SZ}. For nonlinear problems, the literature is scarcer and the computation of the shape derivative is often formal. A novel aspect of this paper is to provide an efficient and rigorous way to compute the shape derivative of the cost functional without the need to compute the material derivative of the solution of the nonlinear state equation. 
The method is based on a novel Lagrangian method for nonlinear problems and on the volume expression of 
the shape derivative; see \cite{LaurainSturm,sturm}. 
This allows to obtain a smooth deformation field used as a descent direction in a gradient method. In the numerical algorithm, the mesh is deformed iteratively using this vector field until it reaches an equilibrium state.

The rest of the paper is organized as follows: In Section \ref{sec:problemFormulation}, we formulate the shape optimization problem and give the underlying nonlinear magnetostatic equation. Existence of a solution to the shape optimization problem is shown in Section \ref{sec:existence}. 
In Section \ref{sec:shapDerivative}, we introduce the general notion of a shape derivative and give an abstract differentiability result which is used later on to compute the shape derivative of the cost functional. 
Section \ref{sec:shapeDerivativeCostFunc} deals with the shape derivative of the cost functional. Finally, in Section \ref{sec:numerics}, a numerical algorithm is presented to optimize the design of $\Omega$, and numerical results showing the optimal shape are presented.

 \begin{figure}
 \begin{minipage}{7cm}
 \begin{tabular}{c}
  \hspace{-5mm}\includegraphics[scale=0.45, trim=300 0 300 0, clip=true]{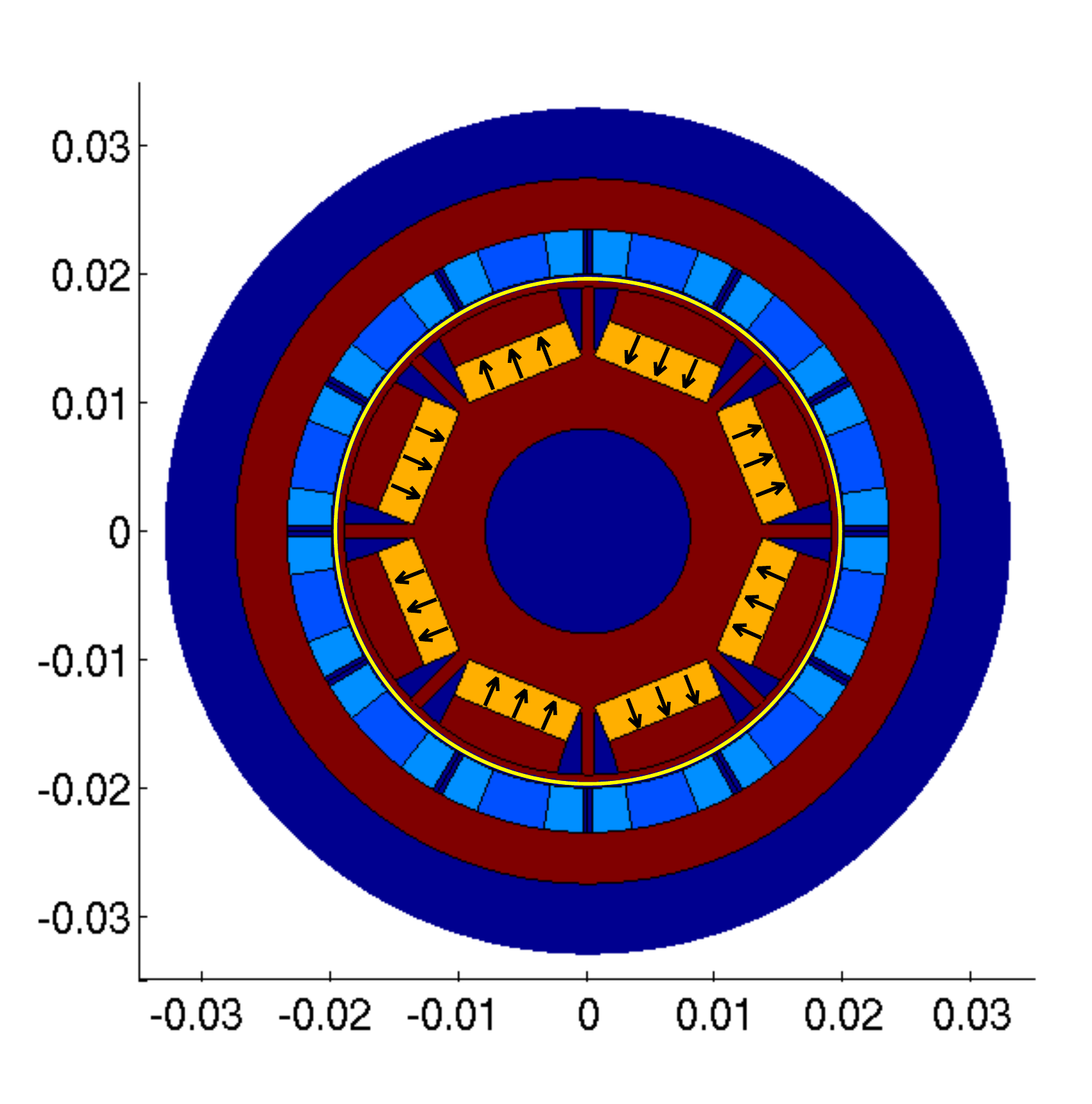} \\
    (a)
  \end{tabular} \end{minipage}
    \hspace{1.2cm}
   \begin{minipage}{7cm}
   \begin{tabular}{c}
		\hspace{-10mm}\includegraphics[scale=0.45, trim=300 0 300 0, clip=true]{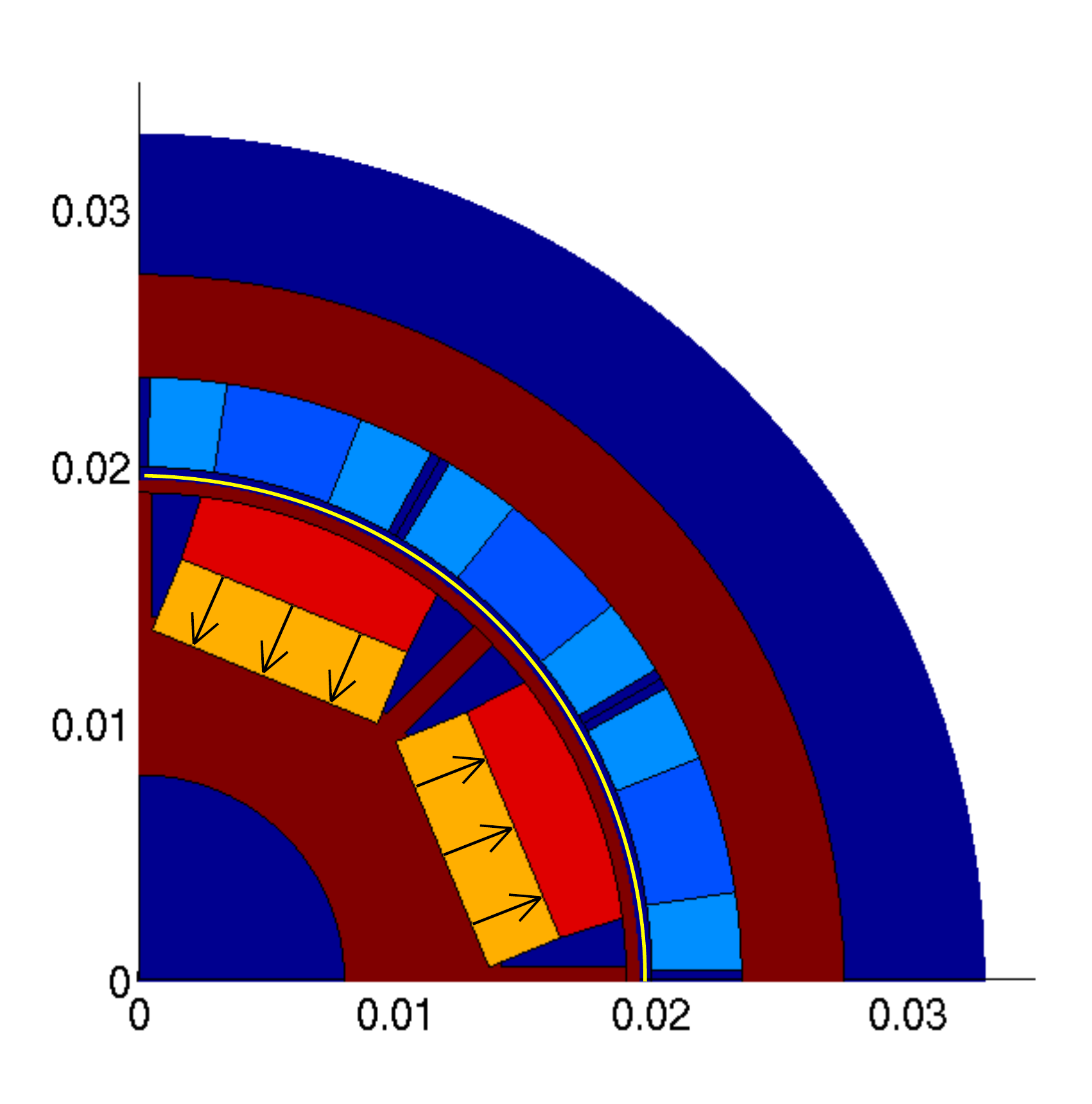}\\
       \hspace{-10mm}(b)
\end{tabular}
 \end{minipage}
 \caption{(a) Geometry of the electric motor with permanent magnet areas $\Omega_{\text{ma}}$ (orange with arrows indicating the directions of magnetization), ferromagnetic material $\Omega_{f}^{\text{ref}}$ (brown), coil areas $\Omega_c$ (light blue), air $\Omega_a$ (dark blue) and thin air gap (yellow layer between rotor and stator); $\Gamma_0$ is a circle located in the air gap. (b) Zoom on the upper right quarter of the motor where the reference design area $\Omr \subset \Omega_f^{\text{ref}}$ corresponds to the highlighted (red) region. The reference region $\Omega^{\text{ref}}$ is subject to the shape optimization procedure. 
}
  \label{fig1}
  \end{figure}

\section{Problem formulation}    \label{sec:problemFormulation}
Let $D\subset\R^2$ be the smooth bounded domain representing an IPM brushless electric motor as depicted in Fig. \ref{fig1} with a ferromagnetic part $\Omega_{f}^{\text{ref}}\subset D$, permanent magnets $\Omega_{\text{ma}}\subset D$, air regions $\Omega_a\subset D$ and coil areas $\Omega_c$.
The design domain $\Omega$ is included in a reference domain $\Omega^{\text{ref}}\subset \Omega_{f}^{\text{ref}}$. The inner part of the motor is called the rotor and the outer part the stator. They are separated by a small air gap, the thin yellow circular layer in Fig. \ref{fig1}. By $\Gamma_0$ we denote a circle within the air gap.
When an electric current is induced in the coils, the rotor containing the permanent magnets rotates.
In reality the motor is a three-dimensional object, but considering the problem only on the cross-section of the motor is a modeling assumption that is commonly used; see \cite{ArumugamEtAl1985, binder2012elektrische}. For a comparison between two- and three-dimensional models of electric motors, see \cite{KolotaSteien2011, TorkamanAfjei2008}.

Denote $\Gamma := \partial \Omega$ the boundary of the optimized part $\Omega$ which is assumed to be Lipschitz. We introduce the variable ferromagnetic set $\Omega_f := (\Omega_f^{\text{ref}} \setminus \overline{\Omega^{\text{ref}}})\cup \Omega$ and $\Gamma_f: = \partial \Omega_f$. The permanent magnets create a magnetic field in $D$. In our application, we assume the coils to be switched off. Thus, no electric current is induced and the rotor is not moving. The magnetic field generated by the permanent magnets can be calculated via a boundary value problem of the form
\begin{equation}
 \label{state}
    \begin{aligned}
        -\divv(\beta_\Omega(x,|\nabla u|^2)\nabla u)  &= f \quad \text{ in } \Omega_f  \text{ and } D\setminus\overline{\Omega}_f, \\
                                       u&=0\quad \text{ on } \partial D,
    \end{aligned}
\end{equation}
with the transmission conditions on the interface $\Gamma_{f}$
\begin{equation}
\label{trans_cond_state}
\begin{aligned}
             \llbracket u \rrbracket  = 0 & \quad \text{ on } \quad \Gamma_f, \\
         \llbracket  \beta_\Omega(x,|\nabla u|^2)\partial_n u\rrbracket  = 0 & \quad \text{ on } \quad \Gamma_f,
\end{aligned}
\end{equation}
where $n$ denotes the outward unit normal vector to $\Omega_f$. Defining $v^-$ the restriction of some function $v$ on $\Omega_f$ 
and $v^+$ its restriction on $D\setminus\overline{\Omega_f}$ we denote by $\llbracket v \rrbracket$ the jump of $v$ across the interface $\Gamma_f$, i.e. 
\[
\llbracket v\rrbracket = v^+|_{\Gamma_f} - v^-|_{\Gamma_f}.
\]
The nonlinear, piecewise smooth function $\beta$ is defined for $(x,\zeta)\in D\times\R$ as
\begin{align*}
\beta_\Omega(x,\zeta)  
&:=\beta_{1}(\zeta)  \chi_{\Omega_f}(x) + \beta_{2}(\zeta)  \chi_{D\setminus \overline{\Omega_{f}}}(x)\\ 
 &=  \beta_{1}(\zeta)  (\chi_{\Omega}(x) + \chi_{\Omega_{f}^{\text{ref}}\setminus \overline{\Omr}}(x)   ) + \beta_{2}(\zeta)  ( \chi_{D\setminus \overline{\Omega_{f}^{\text{ref}}}}(x)  +  \chi_{\Omr\setminus \overline{\Omega}}(x)),
\end{align*}
where $\chi$ is the indicator function of a given set. Note that the expression above is meaningful since $\Omega\subset \Omr\subset \Omega_{f}^{\text{ref}}$. The weak form of \eqref{state} reads
\begin{align} \label{weak_state}
    \mbox{Find } u \in H^1_0(D)\mbox{ such that } &\int_D \beta_\Omega(x,|\nabla u|^2) \nabla u \cdot \nabla v \, \mbox d x = \langle f,v\rangle\quad \forall v\in H^1_{0}(D),
\end{align}
where $\langle\cdot ,\cdot \rangle$ denotes the duality bracket between $H^{-1}(D)$ and $H^{1}_0(D)$. The scalar function $u$ is the third component of the vector potential of the magnetic flux density in three dimensions, $B = \mbox{curl} ( (0, 0, u)^T)$. In our model, we consider the restriction of $B$ to a two-dimensional cross-section since the third component vanishes. 

In the sequel, we make the following assumption for $\beta_1$ and $\beta_2$: 
\begin{assumption} \label{assump:beta}
The functions $\beta_{1}, \beta_{2} : \R \rightarrow \R$ satisfy the following conditions:
	\begin{enumerate}
		\item \label{assump:beta_1}There exist constants $c_{1}, c_{2},c_{3}, c_{4}>0$, such that 
			\[
				c_{1}\leq\beta_{1}(\zeta)\leq c_{2}, \quad      c_{3}\leq\beta_{2}(\zeta)\leq c_{4} \quad \text{ for all }  \zeta\in \R. 
			\] 
		\item \label{assump:beta_2} The function $s \mapsto \beta_i(s^2)s$ is strongly monotone with monotonicity constant $m$ and Lipschitz continuous 
		with Lipschitz constant $L$:
	\begin{align*}
		(\beta_i(s^2)\, s - \beta_i(t^2)\,t)\, (s-t) &\geq m (s-t)^2 \quad &&\text{ for all } s, t \geq 0, \\
		|\beta_i(s^2)\,s - \beta_i(t^2)\,t)| &\leq L  |s-t| \quad &&\text{ for all } s, t \geq 0.
	\end{align*}
		\item \label{assump:beta_3} The functions $\beta_{1}, \beta_{2}$ are in $C^1(\R)$. 
		\item \label{assump:beta_4}There exist constants $\lambda, \Lambda>0$ such that for $i=1,2$,
			\[
				\lambda | \eta |^2\leq \beta_{i}( |\rho |^2) | \eta |^2+ 2\beta'_{i}(|\rho |^2)(\eta \cdot\rho)^2\leq \Lambda | \eta |^2 \quad \text{ for all } \eta, \rho \in \R^2.
			\]
	\end{enumerate}
\end{assumption}
The task is to modify the shape of the design region $\Omega\subset \Omega^{\text{ref}}$ in such a way that the radial component of the resulting magnetic flux density on the circle $\Gamma_0$ in the air gap fits a given data as good as possible. We consider the following minimization problem:
\begin{equation}\label{opt}
\left\{\begin{aligned}
&\mbox{minimize } J(\Omega,u) :=\int_{\Gamma_0} |B_r(u)-B_d|^2 \mbox ds\\
&\text{subject to } \Omega \, \in \, \mathcal{O} \text{ and } u \text{ solution of \eqref{weak_state} } 
\end{aligned}\right.
\end{equation}
where 
\begin{equation}\label{admissible}
\mathcal{O}= \left\{\Omega \subset \Omega^{\text{ref}}\subset \Omega_{f}^{\text{ref}}, \Omega \text{ open and Lipschitz with uniform Lipschitz constant }L_{\mathcal O} \right\}
\end{equation}
and $\Gamma_{0}\subset D$ is a smooth one-dimensional subset of $D$. The sets $\Omega^{\text{ref}}$ and $\Omega^{\text{ref}}_f$ are reference domains; see Fig. \ref{fig1}. Here, $B_r(u) = \nabla u\cdot\tau $ where $\tau$ is the tangential vector to $\Gamma_{0}$ and $B_d \in C^1(\Gamma_0)$ denotes the given desired radial component of the magnetic flux density along the air gap. 
In order to obtain the first-order optimality conditions for this minimization problem we compute the derivative of $J$ with respect to the shape $\Omega$. 
\begin{remark} Let us note that
	\begin{itemize}
		\item in our application, the right-hand side $f$ represents the magnetization of the permanent magnets. In general, it can be a combination of magnetization and impressed currents in the coils. 
		\item the link between $\beta$ and the magnetic reluctivity is $\beta(s) = \nu(\sqrt{s})$. In this case, the boundary value problem \eqref{state} is called the two-dimensional magnetostatic boundary value problem; see \cite{JungLanger:2013a, Pechstein2008a}. We will see in Section \ref{sec:numerics} that Assumption \ref{assump:beta} is satisfied for $\nu$ due to physical properties.
		\item the Dirichlet condition $u|_{\partial D} = 0$ implies $B \cdot n = 0$, thus no magnetic flux can leave the computational domain $D$.
	\end{itemize}
\end{remark}
Given Assumption \ref{assump:beta}, we can state existence and uniqueness of a solution $u$ to the state equation \eqref{weak_state}. 
\begin{theorem} \label{thm:exState}
Assume that Assumption \ref{assump:beta}.\ref{assump:beta_1} and \ref{assump:beta}.\ref{assump:beta_2} hold. Then problem \eqref{weak_state} admits a unique solution $u \in H^1_0(D)$ for any fixed right-hand side $f \in H^{-1}(D)$ and we have the estimate
$$\|u\|_{H_0^1(D)} \leq c \| f \|_{H^{-1}(D)}.$$
\end{theorem}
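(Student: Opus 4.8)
The plan is to recast the weak formulation \eqref{weak_state} as an operator equation and to invoke the theory of strongly monotone, Lipschitz continuous operators. I define $A : H^1_0(D) \to H^{-1}(D)$ by
\[
\langle A(u), v\rangle := \int_D \beta_\Omega(x,|\nabla u|^2)\,\nabla u \cdot \nabla v \, \mathrm{d}x, \qquad u,v \in H^1_0(D),
\]
so that \eqref{weak_state} is equivalent to solving $A(u) = f$. I equip $H^1_0(D)$ with the norm $\|u\| := \|\nabla u\|_{L^2(D)}$, which is equivalent to the full $H^1$-norm by the Poincaré inequality. The goal is then to show that $A$ is strongly monotone and Lipschitz continuous, after which a standard fixed-point argument yields a unique solution together with the a priori bound.

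The crux of the proof is a pointwise lemma transferring the scalar hypotheses of Assumption \ref{assump:beta}.\ref{assump:beta_2} to the vector field $\xi \mapsto \beta_i(|\xi|^2)\,\xi$ on $\mathbf{R}^2$. Specifically, I would prove that for $i=1,2$ and all $\xi,\eta \in \mathbf{R}^2$,
\[
\big(\beta_i(|\xi|^2)\xi - \beta_i(|\eta|^2)\eta\big)\cdot(\xi-\eta) \geq m\,|\xi-\eta|^2, \qquad \big|\beta_i(|\xi|^2)\xi - \beta_i(|\eta|^2)\eta\big| \leq \tilde L\,|\xi-\eta|,
\]
where $\tilde L$ depends on $L$ and on the bound $c_2$ (resp.\ $c_4$) from Assumption \ref{assump:beta}.\ref{assump:beta_1}. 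For the monotonicity estimate I set $a = |\xi|$, $b = |\eta|$; note first that $\beta_i(s^2) = (\beta_i(s^2)s)/s \geq m$ for $s>0$, a consequence of the scalar monotonicity with $t=0$, so that $\beta_i(|\xi|^2)-m \geq 0$. Subtracting $m|\xi-\eta|^2$ from the left-hand side and using $\xi\cdot\eta \leq ab$ reduces the claim to
\[
\big(\beta_i(a^2)a - \beta_i(b^2)b\big)(a-b) \geq m(a-b)^2,
\]
which is precisely the scalar hypothesis. The Lipschitz estimate follows by splitting $\beta_i(|\xi|^2)\xi - \beta_i(|\eta|^2)\eta = \beta_i(|\xi|^2)(\xi-\eta) + (\beta_i(|\xi|^2)-\beta_i(|\eta|^2))\eta$ and controlling each term by the boundedness and the scalar Lipschitz hypothesis together with $\big||\xi|-|\eta|\big| \leq |\xi-\eta|$.

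Granted this lemma, integrating the two pointwise inequalities over $D$ (the piecewise definition of $\beta_\Omega$ causes no difficulty, since $\beta_1$ and $\beta_2$ share the constants $m$ and $\tilde L$) and applying the Cauchy--Schwarz inequality gives at once
\[
\langle A(u)-A(w), u-w\rangle \geq m\,\|u-w\|^2, \qquad \|A(u)-A(w)\|_{H^{-1}(D)} \leq \tilde L\,\|u-w\|.
\]
Thus $A$ is strongly monotone and Lipschitz continuous on the Hilbert space $H^1_0(D)$, so the Browder--Minty theorem — equivalently, the Banach fixed-point theorem applied (after Riesz identification) to the contraction $u \mapsto u - \rho(A(u)-f)$ for small $\rho>0$ — yields a unique $u \in H^1_0(D)$ with $A(u)=f$. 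Finally, since $A(0)=0$, testing the monotonicity inequality with $w=0$ and the equation with $u$ gives
\[
m\,\|u\|^2 \leq \langle A(u), u\rangle = \langle f, u\rangle \leq \|f\|_{H^{-1}(D)}\,\|u\|,
\]
hence $\|u\| \leq m^{-1}\|f\|_{H^{-1}(D)}$, which is the claimed estimate with $c$ absorbing the Poincaré constant relating $\|\cdot\|$ and the $H^1_0$-norm.

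I expect the pointwise monotonicity lemma to be the main obstacle. A naive expansion together with $\xi\cdot\eta \leq ab$ only delivers the weaker bound $m\big(|\xi|-|\eta|\big)^2$; recovering the full $m|\xi-\eta|^2$ requires first isolating the factor $\beta_i - m \geq 0$ and only then invoking the scalar monotonicity in the combined form above. Once this transfer from scalar to vectorial estimates is established, the remaining steps are routine applications of monotone operator theory.
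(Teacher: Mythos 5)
Your proposal is correct. Note, however, that the paper does not actually prove Theorem \ref{thm:exState}: its ``proof'' is a one-line citation to the literature (Heise, Pechstein, Zeidler), and the argument you give -- recasting \eqref{weak_state} as $A(u)=f$ for a strongly monotone, Lipschitz continuous operator on $H^1_0(D)$ and applying the Zarantonello/Browder--Minty fixed-point argument -- is precisely the standard proof contained in those references, so you are supplying the details the paper omits rather than diverging from it. The pointwise monotonicity estimate
\[
\bigl(\beta_i(|\xi|^2)\xi - \beta_i(|\eta|^2)\eta\bigr)\cdot(\xi-\eta) \geq m\,|\xi-\eta|^2
\]
that you correctly identify as the crux is in fact proved in the paper itself as Lemma \ref{lemma1} (used there for the existence of optimal shapes in Section \ref{sec:existence}), by exactly the manipulation you describe: first observing $\beta_i(s^2)\geq m$ from the scalar monotonicity with $t=0$, then isolating the nonnegative factors $\beta_i-m$ and using $\xi\cdot\eta\leq|\xi|\,|\eta|$ to reduce to the scalar hypothesis -- your remark that the naive estimate only yields $m(|\xi|-|\eta|)^2$ is exactly the point of that lemma. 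Your Lipschitz transfer (splitting off $\beta_i(|\xi|^2)(\xi-\eta)$ and controlling the remainder via the uniform bounds $c_2,c_4$ and the scalar Lipschitz condition), the observation that the piecewise definition of $\beta_\Omega$ is harmless because $\beta_1$ and $\beta_2$ share the constants, and the a priori bound from $A(0)=0$ are all sound. In short: a complete and correct proof, consistent with the paper's own machinery, of a statement the paper delegates to references.
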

\begin{proof}
A proof can be found in \cite{Heise1994, Pechstein2004, Zeidler1995}.
\end{proof}
\section{Existence of optimal shapes} \label{sec:existence}

In this section, we prove that problem \eqref{opt} has a solution $\oms$. We make use of the following result \cite[Theorem 2.4.10]{MR2512810}
\begin{theorem}\label{lipCV}
Let $\Omega_n$ be a sequence in $\mathcal{O}$. Then there exists $\Omega\in\mathcal{O}$ and a subsequence $\Omega_{n_k}$ which converges to $\Omega$ in the sense of Hausdorff, and in the sense of characteristic functions. In addition, $\overline{\Omega}_{n_k}$ and $\partial\Omega_{n_k}$ converge in the sense of Hausdorff respectively towards $\overline{\Omega}$ and $\partial\Omega$. 
\end{theorem}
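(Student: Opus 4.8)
The plan is to invoke the classical compactness machinery for families of open sets obeying a uniform cone condition, which is the geometric counterpart of the uniform Lipschitz bound $L_{\mathcal O}$ built into the definition of $\mathcal{O}$ in \eqref{admissible}. First I would record the standard equivalence: a uniform Lipschitz constant for the boundaries $\partial\Omega_n$ is equivalent to a uniform $\varepsilon$-cone property, i.e. there exist $\varepsilon>0$ and a half-angle $\alpha>0$, both depending only on $L_{\mathcal O}$, such that for every $\Omega\in\mathcal{O}$ and every $x\in\partial\Omega$ one can place an open cone of height $\varepsilon$ and half-angle $\alpha$ with vertex $x$ entirely inside $\Omega$, together with a symmetric cone inside the complement. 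The key point is that $(\varepsilon,\alpha)$ are the same for the whole family $\{\Omega_n\}$.

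Second, since every closure $\overline{\Omega}_n$ is contained in the fixed compact set $\overline{\Omr}$, Blaschke's selection theorem (compactness of the collection of compact subsets of a compact metric space for the Hausdorff distance) yields a single subsequence along which $\overline{\Omega}_{n_k}$, the boundaries $\partial\Omega_{n_k}$, and the complements $\overline{\Omr}\setminus\Omega_{n_k}$ each converge in the Hausdorff distance. I would then define the candidate limit $\Omega$ as the open set whose complement in $\overline{\Omr}$ is the Hausdorff limit of the $\overline{\Omr}\setminus\Omega_{n_k}$, which is exactly Hausdorff convergence of the open sets $\Omega_{n_k}\to\Omega$.

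Third, and this is where the uniform cone property is indispensable, I would show that the cone property passes to the limit: if each $\Omega_{n_k}$ enjoys the $\varepsilon$-cone property with parameters $(\varepsilon,\alpha)$ and $\Omega_{n_k}\to\Omega$ in the Hausdorff sense, then the limit $\Omega$ enjoys the very same $\varepsilon$-cone property, hence is open and Lipschitz with constant $L_{\mathcal O}$, so that $\Omega\in\mathcal{O}$. The same cones simultaneously force the three Hausdorff convergences to be mutually consistent and, crucially, upgrade them to $L^1(\overline{\Omr})$ convergence of the characteristic functions $\chi_{\Omega_{n_k}}\to\chi_\Omega$: the interior and exterior cones prevent the boundary layer from carrying positive measure in the limit, yielding a quantitative bound of the symmetric difference $|\Omega_{n_k}\,\triangle\,\Omega|$ in terms of the Hausdorff distance $d^H(\partial\Omega_{n_k},\partial\Omega)$.

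I expect the main obstacle to be precisely this last step. Without the uniform cone condition, Hausdorff convergence of open sets implies neither convergence of the characteristic functions (the measures may jump) nor preservation of the Lipschitz character of the limit, so the technical heart of the argument is the stability of the $\varepsilon$-cone property under Hausdorff convergence together with the measure estimate it produces. Everything else reduces to the abstract compactness of compact subsets of the compact metric space $\overline{\Omr}$, and to unwinding the definitions relating the Hausdorff convergence of complements, of closures, and of boundaries.
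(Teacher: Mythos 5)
The paper does not prove this statement at all: it is imported verbatim as Theorem 2.4.10 of \cite{MR2512810}, so there is no internal proof to compare against. Your sketch is precisely the standard argument behind that cited result (Chenais' compactness theorem): uniform Lipschitz constant $\Leftrightarrow$ uniform $\varepsilon$-cone property, Blaschke selection for the Hausdorff convergence of $\overline{\Omega}_{n_k}$, $\partial\Omega_{n_k}$ and the complements, stability of the cone property under Hausdorff limits to get $\Omega\in\mathcal{O}$, and the cone geometry to upgrade to $L^1$ convergence of the characteristic functions. The approach is correct and is the same one the cited source takes; the only caveat is that what you have written is an outline, and the two steps you yourself identify as the technical heart --- the persistence of the $\varepsilon$-cone property under Hausdorff convergence and the estimate on $|\Omega_{n_k}\,\triangle\,\Omega|$ in terms of $d^H(\partial\Omega_{n_k},\partial\Omega)$ --- are exactly the nontrivial lemmas that would still need to be carried out in detail (or, as the authors do, delegated to \cite{MR2512810}).
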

Let $\Omega_n\in\mathcal{O}$ be a minimizing sequence for problem \eqref{opt}. According to Theorem \ref{lipCV}, we can extract a subsequence, which we still denote $\Omega_n$, which converges to some $\Omega\in\mathcal{O}$. Denote $u_n$ and $u$ the solutions of \eqref{state}-\eqref{trans_cond_state} with $\Omega_n$ and $\Omega$, respectively. We prove that $u_n\to u$ in $H^1_0(D)$.

First of all in view of Theorem \ref{thm:exState} we have
\begin{align} \label{eq:un_bounded}
	\| u_n \|_{H^1_0(D)}  \leq c \| f \|_{H^{-1}(D)}
\end{align}
where $c$ depends only on $D$. Thus we may extract a subsequence $u_n$ such that $ u_n \to \us$ in $\Ltwo(D)$ and $\nabla u_n \rightharpoonup \nabla\us$ weakly in $\Ltwo(\Omega)$. Extracting yet another subsequence, we may as well assume that $\Omega_n \to \oms$ in the sense of characteristic functions applying  Theorem~\ref{lipCV}.

Subtracting the variational formulation for two elements $u_n$ and $u_m$ of the sequence and choosing the test function $v= u_n - u_m$ we get
\begin{align} \label{6}
&\int_D \left(  (\beta_{\Omega_n}(x,|\nabla u_n|^2) \nabla u_n    -   (\beta_{\Omega_m}(x,|\nabla u_m|^2) \nabla u_m  \right) \cdot \nabla (u_n - u_m) \, \mbox d x = 0
\end{align}
Let us introduce for simplicity the notation $\bonen := \beta_1(|\nabla u_n|^2)$ and $\btwon := \beta_2(|\nabla u_n|^2)$. Then \eqref{6} becomes
\begin{align*} 
&\int_D  (     \bonen    (\chi_{\Omega_n}               + \chi_{\Omega_{f}^{\text{ref}}\setminus \Omr}   )         
             + \btwon    (\chi_{D\setminus \Omega_{f}^{\text{ref}}}  + \chi_{\Omr\setminus \Omega_n}     )) \nabla u_n \cdot \nabla (u_n - u_m)\\
& -   (        \bonem    (\chi_{\Omega_m}               + \chi_{\Omega_{f}^{\text{ref}}\setminus \Omr}   )         
             + \btwom    (\chi_{D\setminus \Omega_{f}^{\text{ref}}}  + \chi_{\Omr\setminus \Omega_m}     )) \nabla u_m \cdot \nabla (u_n - u_m) \, \mbox d x = 0
\end{align*}
This yields 
\begin{equation}\label{Is=0}
I_1 + I_2 + I_3 + I_4 + I_5 + I_6 = 0 
\end{equation}
where
\begin{align*} 
I_1 := & \int_D (\chi_{\Omega_n} - \chi_{\Omega_m})  \bonen \nabla u_n     \cdot \nabla (u_n - u_m)\,dx  ,\\
I_2 := & \int_D  \chi_{\Omega_m}(\bonen \nabla u_n   - \bonem\nabla u_m   )\cdot \nabla (u_n - u_m) \,dx ,\\
I_3 := & \int_D  \chi_{\Omega_{f}^{\text{ref}}\setminus \Omr} (\bonen  \nabla u_n - \bonem\nabla u_m) \cdot \nabla (u_n - u_m) \,dx,\\
I_4 := & \int_D  \chi_{D\setminus \Omega_{f}^{\text{ref}}}    (\btwon  \nabla u_n - \btwom\nabla u_m) \cdot \nabla (u_n - u_m) \,dx,\\
I_5 := & \int_D (\chi_{\Omr\setminus \Omega_n} - \chi_{\Omr\setminus \Omega_m})  \btwon \nabla u_n     \cdot \nabla (u_n - u_m)\,dx , \\
I_6 := & \int_D  \chi_{\Omr\setminus \Omega_m}(\btwon \nabla u_n   - \btwom\nabla u_m   )\cdot \nabla (u_n - u_m) \,dx .
\end{align*}
To estimate the above integrals, we use the following lemma:
\begin{lemma}\label{lemma1}
Let $p,q\in\R^2$. If Assumption \ref{assump:beta}.\ref{assump:beta_2} holds, then
$$ [\beta_i(|p|^2)p - \beta_i(|q|^2)q ]\cdot(p-q)  \geq m |p-q|^2. $$
\end{lemma}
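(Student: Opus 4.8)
The plan is to reduce the claimed two-dimensional monotonicity estimate to the one-dimensional strong monotonicity hypothesis in Assumption~\ref{assump:beta}.\ref{assump:beta_2}. To that end I would introduce the scalars $s := |p| \geq 0$ and $t := |q| \geq 0$ and expand the inner product, using $|p|^2 = s^2$ and $|q|^2 = t^2$:
\begin{equation*}
[\beta_i(s^2)p - \beta_i(t^2)q]\cdot(p-q) = \beta_i(s^2)s^2 + \beta_i(t^2)t^2 - (\beta_i(s^2) + \beta_i(t^2))\,(p\cdot q).
\end{equation*}
Similarly $m|p-q|^2 = m(s^2 + t^2 - 2\,p\cdot q)$. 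Both quantities are now affine in the single scalar $c := p\cdot q$, so the whole inequality becomes a statement about $c$ that I can control through the Cauchy--Schwarz bound $c \leq st$.

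A preliminary fact I would establish first is that $\beta_i(\zeta) \geq m$ for every $\zeta \geq 0$. This follows by setting $t = 0$ in the scalar monotonicity inequality, which gives $\beta_i(s^2)\,s\cdot s \geq m s^2$, hence $\beta_i(s^2) \geq m$ for $s > 0$, the case $s = 0$ being trivial. This sign information is exactly what makes the Cauchy--Schwarz inequality usable in the correct direction in the next step.

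Subtracting the two expressions above, I would study
\begin{equation*}
g(c) := \beta_i(s^2)s^2 + \beta_i(t^2)t^2 - (\beta_i(s^2)+\beta_i(t^2))\,c - m(s^2 + t^2 - 2c),
\end{equation*}
which is affine in $c$ with slope $g'(c) = -(\beta_i(s^2) + \beta_i(t^2) - 2m) \leq 0$ thanks to $\beta_i \geq m$. Since $g$ is nonincreasing and $c = p\cdot q \leq st$ by Cauchy--Schwarz, I obtain $g(p\cdot q) \geq g(st)$. A direct regrouping then shows that $g(st)$ collapses to the purely scalar expression
\begin{equation*}
g(st) = [\beta_i(s^2)s - \beta_i(t^2)t]\,(s - t) - m(s-t)^2,
\end{equation*}
which is nonnegative by the strong monotonicity of $\sigma \mapsto \beta_i(\sigma^2)\sigma$ with constant $m$ (valid for $s,t \geq 0$). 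Hence $g(p\cdot q) \geq 0$, which is precisely the claim.

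The main subtlety, and the point where a naive argument breaks down, is the direction of the Cauchy--Schwarz inequality. One cannot simply bound $|p-q|^2$ from below by $(s-t)^2$, since $\big||p|-|q|\big| \leq |p-q|$ points the wrong way and would yield only the weaker estimate $m(s-t)^2 \leq m|p-q|^2$. The correct route is to retain $p\cdot q$ as a free scalar, observe that the combined quantity $g$ is monotone in it with the favorable sign guaranteed by $\beta_i \geq m$, and only then invoke $p\cdot q \leq st$; this is exactly what aligns the two-dimensional inequality with the one-dimensional hypothesis.
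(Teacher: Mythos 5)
Your proof is correct and follows essentially the same route as the paper: both arguments subtract $m|p-q|^2$, observe that the remainder is affine in $p\cdot q$ with nonpositive slope because $\beta_i\geq m$, and then invoke Cauchy--Schwarz ($p\cdot q\leq |p||q|$) to reduce to the scalar strong monotonicity of $s\mapsto\beta_i(s^2)s$. The only difference is presentational: you make explicit the step $\beta_i\geq m$ (obtained by taking $t=0$ in the scalar hypothesis), which the paper uses silently under its underbraces.
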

\begin{proof}
Using Assumption \ref{assump:beta}.\ref{assump:beta_2} we compute
\begin{align*}
[\beta_i(|p|^2)p & - \beta_i(|q|^2)q ]  \cdot(p-q) \\
& =  m |p-q|^2  + [(\beta_i(|p|^2) - m)p - (\beta_i(|q|^2) - m)q ]\cdot(p-q) \\
& =  m |p-q|^2  + \underbrace{(\beta_i(|p|^2) - m)}_{\geq 0}  \underbrace{(|p|^2 - p\cdot q)}_{\geq  |p|(|p| - |q|)} -  \underbrace{(\beta_i(|q|^2) - m)}_{\geq 0} \underbrace{(p\cdot q  -|q|^2)}_{\leq |q|(|p| - |q|)}\\
& =  m |p-q|^2  + \underbrace{(\beta_i(|p|^2)|p|  - \beta_i(|q|^2)|q|)  (|p| - |q|)}_{\geq m (|p| - |q|)^2} - m(|p| - |q|)^2\\
&  \geq m |p - q|^2.
\end{align*}
\end{proof}
Applying Lemma~\ref{lemma1} with $p = \nabla u_n$ and $q = \nabla u_m$ we get
\begin{align} 
\notag I_2 +I_3 + I_4 + I_6 &\geq   m \int_D  (\chi_{\Omega_m}  +  \chi_{\Omega_{f}^{\text{ref}}\setminus \Omr}  +  \chi_{D\setminus \Omega_{f}^{\text{ref}}}  +  \chi_{\Omr\setminus \Omega_m})   |\nabla u_n  -  \nabla u_m|^2\,dx \\
\label{Is2} &  =   m \int_D  |\nabla u_n  -  \nabla u_m|^2\,dx.
\end{align}
H\"older's inequality yields
$$|I_1| \leq \| \bonen \|_{L^{\infty}(D)} \| \chi_{\Omega_n} - \chi_{\Omega_m}  \|_{L^r(D)}  \| \nabla u_n \|_{\Ltwo(D)}   \| \nabla (u_n - u_m) \|_{L^{s}(D)} $$
with $r^{-1} + s^{-1} = 1/2$, $r,s\geq 1$. Performing a similar estimate for $I_5$ and in view of Assumption \ref{assump:beta}.\ref{assump:beta_1} this yields
\begin{align} 
\label{Is3} |I_1| &  \leq c_2\| \chi_{\Omega_n} - \chi_{\Omega_m}  \|_{L^r(D)}  \| \nabla u_n \|_{\Ltwo(D)}   \| \nabla (u_n - u_m) \|_{L^{s}(D)} \\
\label{Is4} |I_5| &  \leq c_4\| \chi_{\Omega_n} - \chi_{\Omega_m}  \|_{L^r(D)}  \| \nabla u_n \|_{\Ltwo(D)}   \| \nabla (u_n - u_m) \|_{L^{s}(D)}  
\end{align}
Using equality \eqref{Is=0} and inequalities \eqref{eq:un_bounded},\eqref{Is2},\eqref{Is3},\eqref{Is4} we obtain the estimate
\begin{align*} 
\int_D  |\nabla u_n  -  \nabla u_m|^2\,dx \leq c\| \chi_{\Omega_n} - \chi_{\Omega_m}  \|_{L^r(D)} \|f\|_{H^{-1}(D)}   \| \nabla (u_n - u_m) \|_{L^{s}(D)} 
\end{align*}
Since $\chi_{\Omega_n}$ is a characteristic function, the parameter $r$ can be chosen arbitrarily large, and consequently, $s$ can be chosen arbitrarily close to $2$. Therefore, assuming a little more regularity than $H^1$ for the solution of \eqref{weak_state}, the convergence of the characteristic functions of $\Omega_n$ in $L^p(D)$ implies the strong convergence of $u_n$ towards $\us$ in $H^1_0(D)$. Consequently, we obtain the following result:
\begin{proposition}
Let $\Omega_n\in\mathcal{O}$ be a minimizing sequence for problem \eqref{opt} and $\Omega$ be an accumulation point of this sequence as in Theorem \ref{lipCV}. Assume there exists $\e>0$ such that the solution $u$ of \eqref{weak_state} satisfies
$$ \| u \|_{H^{1+\e}(D)\cap H^1_0(D)}  \leq c  $$
where $c$ depends only on $f$ and $D$. Then the sequence $u_n\in H^1_0(D)$ corresponding to $\Omega_n$ converges to $u$ strongly in $H^1_0(D)$, where $u$ is the solution of \eqref{weak_state} in $\Omega$.
\end{proposition}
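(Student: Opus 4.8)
The plan is to combine the a priori estimates already established with the extra regularity hypothesis in order to upgrade the Cauchy-type inequality derived above into genuine strong convergence, and then to identify the limit by passing to the limit in the weak formulation.

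First I would record the two uniform bounds at my disposal: the $H^1_0(D)$ bound \eqref{eq:un_bounded} coming from Theorem~\ref{thm:exState}, and the hypothesised bound $\|u_n\|_{H^{1+\e}(D)}\leq c$. The latter is the crucial ingredient. Since $D\subset\R^2$, the fractional Sobolev embedding applied to $\nabla u_n\in H^{\e}(D)$ gives $\nabla u_n\in L^s(D)$ with $s=2/(1-\e)>2$ (taking $\e<1$ without loss of generality), and continuity of the embedding yields a uniform bound $\|\nabla u_n\|_{L^s(D)}\leq C$. This is exactly the exponent needed: with this value of $s$, the relation $r^{-1}+s^{-1}=1/2$ forces the conjugate exponent $r=2/\e$ to be \emph{finite}.

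Next I would return to the estimate derived just before the statement, namely
\[
\int_D |\nabla u_n-\nabla u_m|^2\,dx \leq c\,\|\chi_{\Omega_n}-\chi_{\Omega_m}\|_{L^r(D)}\,\|f\|_{H^{-1}(D)}\,\|\nabla(u_n-u_m)\|_{L^s(D)}.
\]
The last factor is bounded by $\|\nabla u_n\|_{L^s}+\|\nabla u_m\|_{L^s}\leq 2C$, so absorbing it into the constant leaves
\[
\|\nabla(u_n-u_m)\|_{L^2(D)}^2 \leq c'\,\|\chi_{\Omega_n}-\chi_{\Omega_m}\|_{L^r(D)}.
\]
Because $\chi_{\Omega_n}-\chi_{\Omega}$ takes values in $\{-1,0,1\}$ one has $\|\chi_{\Omega_n}-\chi_{\Omega}\|_{L^r(D)}^r=\|\chi_{\Omega_n}-\chi_{\Omega}\|_{L^1(D)}\to 0$ by the characteristic-function convergence of Theorem~\ref{lipCV}; hence the right-hand side tends to $0$ as $n,m\to\infty$. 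By Poincaré's inequality the family $\{u_n\}$ is therefore Cauchy in $H^1_0(D)$ and converges strongly to its weak limit $\us$.

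It remains to identify $\us$ with the solution $u$ in $\Omega$. Passing to a further subsequence so that $\nabla u_n\to\nabla\us$ pointwise a.e. and $\chi_{\Omega_n}\to\chi_{\Omega}$ a.e., the boundedness and continuity of $\beta_1,\beta_2$ (Assumption~\ref{assump:beta}.\ref{assump:beta_1} and \ref{assump:beta}.\ref{assump:beta_3}) together with dominated convergence give $\beta_{\Omega_n}(x,|\nabla u_n|^2)\nabla u_n\to\beta_\Omega(x,|\nabla\us|^2)\nabla\us$ in $\Ltwo(D)$; letting $n\to\infty$ in the weak formulation \eqref{weak_state} for $u_n$ then shows that $\us$ solves \eqref{weak_state} in $\Omega$. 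Uniqueness (Theorem~\ref{thm:exState}) forces $\us=u$, and since every subsequence admits a further subsequence with this same limit, the whole sequence converges. I expect the main obstacle to be conceptual rather than computational: one must recognise that the bare $H^1$ theory is insufficient, as it would require $s=2$ and hence $r=\infty$, whereas characteristic functions do \emph{not} converge in $L^\infty$; the $H^{1+\e}$ hypothesis is precisely what buys the room $s>2$, $r<\infty$ that makes the argument close.
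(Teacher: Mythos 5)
Your proposal is correct and follows essentially the same route as the paper: the Cauchy-type estimate with exponents $r^{-1}+s^{-1}=1/2$, closed by the extra $H^{1+\e}$ regularity, followed by identification of the limit via pointwise a.e.\ convergence of $\beta_{\Omega_n}(\cdot,|\nabla u_n|^2)$ in the weak formulation and uniqueness. The only difference is that you make explicit what the paper leaves implicit, namely the embedding $H^{\e}(D)\hookrightarrow L^{2/(1-\e)}(D)$ in two dimensions that turns the hypothesis into the uniform $L^s$ bound with $s>2$ and hence a finite $r=2/\e$.
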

\begin{proof}
We have seen that there exists $\us\in H^1_0(D)$ such that $u_n\to \us$ in $H^1_0(D)$. We just need to prove that $\us = u$. The strong convergence of $u_n$ in $H^1_0(D)$ yields $\nabla u_n \to\nabla \us$ pointwise almost everywhere in $D$, and also the pointwise almost everywhere (a.e.) convergence $\beta_i^n \to \beta_i(|\nabla \us|^2)$ for $i=1,2$. We also have the pointwise a.e. convergence of the characteristic function $\chi_{\Omega_n}$ to $\chi_{\Omega^\star}$ which implies the pointwise a.e. convergence $\beta_{\Omega_n}(\cdot, |\nabla u_n|^2) \to \beta_{\Omega^\star}(\cdot,|\nabla \us |^2)$. Next, the weak formulation for $u_n$ is 
\begin{align*} 
\int_D  \beta_{\Omega_n}(x,|\nabla u_n|^2)\nabla u_n \cdot \nabla v \, \mbox d x = \langle f,v\rangle\quad \text{ for all } v\in H^1_{0}(D),
\end{align*}
The strong convergence of $u_n$ in $H^1_0(D)$ and the pointwise convergence of $\beta_{\Omega_n}(\cdot ,|\nabla u_n|^2)$ implies
\begin{align*} 
\int_D  \beta_{\oms}(x,|\nabla \us|^2)\nabla \us \cdot \nabla v \, \mbox d x = \langle f,v\rangle\quad \text{ for all } v\in H^1_{0}(D),
\end{align*}
which proves finally $\us = u$.
\end{proof}
\begin{remark}\label{rem:lipschitz}
In fact we have proven a stronger result, i.e., the Lipschitz continuity of $u_n$ in $H^1_0(D)$ with respect to the characteristic function $\chi_{\Omega_n}$. 
\end{remark}

\section{Shape derivative} \label{sec:shapDerivative}
\subsection{Preliminaries} 
In this section, we recall some basic facts about the velocity method in shape optimization used to transform a reference shape; see \cite{DZ2,SZ}. In the velocity method, also known as speed method, a domain $\Omega\subset D\subset \R^2$ is deformed by the action of a velocity field $V$ defined on $D$. Suppose that $D$ is a Lipschitz domain and denote its boundary $\Sigma:=\partial D$. The domain evolution is described by the solution of the dynamical system
 \begin{equation}
 \label{speed}
 \frac{d}{dt}x(t)=V(x(t)),  t\in [0,\varepsilon ), \quad x(0)=X\in \R^2
\end{equation} 
for some real number $\varepsilon>0$.  
Suppose that $V$ is continuously differentiable and has compact support in $D$, i.e. $V\in \mathcal{D}^1(D,\R^2)$. 
Then the ordinary differential equation \eqref{speed} has a unique solution on $[0,\varepsilon)$. 
This enables us to define the diffeomorphism
 \begin{equation}
 \label{transf}
 T_{t} :\R^2\rightarrow \R^2;  X\mapsto T_{t}(X):=x(t). 
 \end{equation}
With this choice of $V$, the domain $D$ is globally invariant 
by the transformation $T_t$, i.e. $T_t(D) = D$ and $T_t(\partial D) = \partial D$.  
For $t\in [0,\varepsilon)$, $ T_t$ is invertible. Furthermore, for 
sufficiently small $\tau>0$, the 
Jacobian determinant 
\begin{equation}\label{JacobiDet}
	\xi(t):=\text{det}DT_{t}
\end{equation}
of $T_t$ is strictly positive. 
In the sequel, we use the notation $DT_{t}^{-1}$ for the inverse of $DT_{t}$ and $DT_{t }^{-T}$
for the transpose of the inverse. We also denote by 
\begin{equation} \label{tangJacobiDet}
	\xi_\tau(t):= \xi(t)|DT_{t}^{-T}n|
\end{equation}
the tangential Jacobian of $T_{t}$ on $\partial  D$.\\
Then the following lemma holds \cite{DZ2}:
\begin{lemma}
For \(\varphi \in W^{1,1}_{loc}(\R^2) $ and $ V \in \mathcal{D}^1(\R^2,\R^2)$ we have 
\begin{align*}
\nabla (\varphi \circ T_{t})	&=DT_{t}^T(\nabla \varphi)\circ T_{t}, \qquad\qquad  \frac{d}{dt}(\varphi \circ T_{t})	=(\nabla \varphi \cdot V)\circ T_{t}, \\
\frac{d\xi(t)}{dt}    &=  \xi(t) \left[  \divv V(t)\right]\circ T_{t}, \qquad\qquad \xi_\tau'(0) = \divv_{\partial D}V := \divv V|_{\partial D}-DV n\cdot n.
\end{align*}
\end{lemma}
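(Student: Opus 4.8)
The four claimed identities fall into two groups: the first two are instances of the chain rule for the pullback $\varphi\circ T_t$, while the last two describe the evolution of the volume Jacobian $\xi$ and the tangential Jacobian $\xi_\tau$. My plan is to establish first the \emph{variational equation} for the flow,
\[
	\dt DT_t = (DV\circ T_t)\,DT_t,
\]
obtained by differentiating \eqref{speed} with respect to the initial datum $X$ and exchanging $\partial_X$ with $\dt$; this is justified because $V\in\mathcal{D}^1(\R^2,\R^2)$ guarantees, by standard ODE theory, that $T_t$ is jointly $C^1$ in $(t,X)$ and that $DT_t$ is invertible with $\xi(t)>0$ for small $t$. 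This single identity is the engine behind both Jacobian formulas.

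For the first identity I would start with $\varphi\in C^1$, where the chain rule gives, componentwise, $\partial_{X_j}(\varphi\circ T_t) = \sum_i (\partial_i\varphi)\circ T_t\,(DT_t)_{ij}$, i.e. $\nabla(\varphi\circ T_t) = DT_t^T(\nabla\varphi)\circ T_t$. To reach $\varphi\in W^{1,1}_{loc}$ I would approximate $\varphi$ by smooth functions $\varphi_k\to\varphi$ in $W^{1,1}_{loc}$ and pass to the limit, using that $T_t$ is a $C^1$ diffeomorphism with $\xi(t)>0$, so that composition is continuous on $W^{1,1}_{loc}$ and $DT_t^T$ is bounded on compacts. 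The second identity is immediate from \eqref{speed}: for $x(t)=T_t(X)$ one has $\dt(\varphi(x(t))) = \nabla\varphi(x(t))\cdot\dot x(t) = (\nabla\varphi\cdot V)\circ T_t$, again extended to $W^{1,1}_{loc}$ by the same density argument.

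For the third identity I would invoke Jacobi's formula $\dt\det A = (\det A)\,\tr(A^{-1}\dot A)$ with $A=DT_t$. Substituting the variational equation gives $A^{-1}\dot A = DT_t^{-1}(DV\circ T_t)\,DT_t$, and since the trace is invariant under conjugation, $\tr(A^{-1}\dot A) = \tr\big((DV)\circ T_t\big) = (\divv V)\circ T_t$. Hence $\dt\xi(t) = \xi(t)\,(\divv V)\circ T_t$, as claimed.

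The last identity is the main obstacle, both because it combines several derivative rules and because of the transpose bookkeeping. Writing $\xi_\tau(t) = \xi(t)\,|w(t)|$ with $w(t):=DT_t^{-T}n$ and differentiating the product, I evaluate everything at $t=0$, where $T_0=\mathrm{id}$, $DT_0=I$, $\xi(0)=1$ and $|w(0)|=|n|=1$. The first factor contributes $\dot\xi(0)=\divv V$ by the third identity. For the second factor I need $\dt DT_t^{-1} = -DT_t^{-1}(\dt DT_t)DT_t^{-1} = -DT_t^{-1}(DV\circ T_t)$, which at $t=0$ equals $-DV$; transposing gives $\dt DT_t^{-T}\big|_{t=0} = -DV^T$, so $\dot w(0) = -DV^T n$. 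Using $\dt|w| = (w\cdot\dot w)/|w|$ and the scalar identity $n\cdot DV^T n = n^T DV n = DVn\cdot n$, the second factor contributes $-DVn\cdot n$. Adding the two contributions yields $\xi_\tau'(0) = \divv V - DVn\cdot n = \divv_{\partial D}V$, where $\divv V$ is understood as its restriction to $\partial D$. The care required here is entirely in tracking inverses and transposes; once the variational equation is in hand, no further analytic input is needed.
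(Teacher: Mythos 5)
Your proof is correct, and it follows the standard argument (variational equation for the flow, Jacobi's formula, product rule for the tangential Jacobian) that the paper itself does not reproduce but delegates entirely to the cited reference \cite{DZ2}, where essentially this same computation appears. The transpose bookkeeping in the last identity, which is where such proofs usually go wrong, is handled correctly: $\frac{d}{dt}DT_t^{-T}\big|_{t=0}=-DV^T$ and $n\cdot DV^Tn = DVn\cdot n$ give exactly $\xi_\tau'(0)=\divv V|_{\partial D}-DVn\cdot n$.
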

\begin{definition}
\label{def1}
Suppose we are given a real valued shape function $J$ defined on a subset $\Xi$ 
of the powerset $2^{\R^2}$.
We say that $J$ is {\it Eulerian semi-differentiable} at $\Omega\in \Xi$ in the direction $V$ if the following limit exists in $\R$
\[
dJ(\Omega;V) := \lim_{t\searrow 0}\frac{J(T_{t}(\Omega))-J(\Omega)}{t}.
\]
If the map $V\longrightarrow dJ(\Omega;V)$ is linear and continuous with respect to 
the topology of $\mathcal D(D,\R^2):=C^\infty_c(D,\R^2)$, then $J$ is said to be shape differentiable at $\Omega$ and $dJ(\Omega; V)$ is called the shape derivative of $J$.\\
\end{definition} 

\subsection{An abstract differentiability result}

Let $E$ and $ F$ be Banach spaces. Let $G$ be a function 
\ben\label{eq:lagrangian}
G:[0,\tau]\times E \times F \rightarrow \R,\quad (t,\varphi,\psi)\mapsto G(t,\varphi,\psi)
\een
and define
\ben
E(t) :=\{u\in E|\; d_\psi G(t,u,0;\hat{\psi})=0\;\text{ for all } \hat{\psi}\in  F\}.
\een
Let us introduce the following hypotheses.
\begin{assumption}[H0] 
For every $(t,\psi)\in [0,\tau]\times F$, we assume that
\begin{enumerate}
\item[(i)] the set $E(t)$ is single-valued and we write $E(t)= \{u^t\}$,
\item[(ii)] the function $[0,1]\ni s\mapsto G(t,su^t+s(u^t-u^0),\psi)$ is absolutely continuous,
\item[(iii)] the function $[0,1]\ni s\mapsto d_\varphi G(t,su^t+(1-s)u^0,\psi;\varphi)$ belongs to $L^1(0,1)$ for all $\varphi\in E$,
\item[(iv)] the function $\tilde\psi\mapsto G(t,u,\tilde\psi)$ is affine-linear.
\end{enumerate}
\end{assumption}
For $t\in [0,\tau]$ and $u^t \in E(t)$, let us introduce the set
\begin{equation}\label{averated_}
Y(t,u^t,u^0):=\left\{q\in  F|\;  \forall \hat{\varphi}\in E:\; \int_0^1 d_\varphi G(t,su^t+(1-s)u^0,q;\hat{\varphi})\, ds =0 \right\},
\end{equation}
which is called solution set of the \textit{averaged adjoint equation} with respect to $t$, $u^t$ and $u^0$.
Note that $Y(0,u^0,u^0)$ coincides with the solution set of the usual adjoint state equation:
\begin{equation}\label{averated_-1}
Y(0,u^0,u^0)= \left\{q\in  F|\; d_\varphi G(0,u^0,q;\hat{\varphi}) =0\quad \text{ for all }\hat{\varphi}\in E \right\}. 
\end{equation}
The following result, proved in \cite{sturm} allows us to compute the Eulerian semi-derivative of Definition \ref{def1} without computing the material derivative $\dot u$.
The key is the introduction of the set \eqref{averated_}.
\begin{theorem}\label{thm:sturm}
Let Assumption (H0) hold and the following conditions be satisfied.
\begin{itemize}
 \item[(H1)] For all $t\in [0,\tau]$ and all $\psi\in F$ the derivative $\partial_t G(t,u^0,\psi)$ exists.
 \item[(H2)] For all $t\in[0,\tau]$, the set $Y(t,u^t,u^0)$ is single-valued and we write $Y(t,u^t,u^0) = \{p^t\}$. 
 \item[(H3)] For any sequence of non-negative real numbers $(t_n)_{n\in \N}$ converging to zero, there exists a subsequence $(t_{n_k})_{k\in \N}$
such that
$$ \lim_{\stackrel{k\rightarrow \infty}{	s\searrow 0}}\partial_t G(s,u^0,p^{t_{n_k}})=\partial_tG(0,u^0,p^0).$$
\end{itemize}
Then for $\psi \in F$
we obtain
\begin{equation}\label{eq:dt_G_single}
\dt(G(t,u^t,\psi))|_{t=0}=\partial_t G(0,u^0,p^0).
 \end{equation}
\end{theorem}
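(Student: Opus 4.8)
The plan is to compute the difference quotient
$$\frac{G(t,u^t,\psi)-G(0,u^0,\psi)}{t}$$
by inserting the averaged adjoint state $p^t$ supplied by (H2), collapsing the numerator through the structural identities satisfied by $u^t$, $u^0$ and $p^t$, and only then passing to the limit $t\searrow 0$ with the help of (H3). The whole point is to avoid ever differentiating (or even proving the convergence of) $t\mapsto u^t$ or $t\mapsto p^t$.

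First I would record that the left-hand side is in fact independent of $\psi$. Since $u^t\in E(t)$ we have $d_\psi G(t,u^t,0;\hat\psi)=0$ for all $\hat\psi\in F$, and since $\tilde\psi\mapsto G(t,u^t,\tilde\psi)$ is affine-linear by (H0)(iv), the map $\psi\mapsto G(t,u^t,\psi)$ is constant; the same holds at $t=0$. Consequently $G(t,u^t,\psi)=G(t,u^t,p^t)$ and $G(0,u^0,\psi)=G(0,u^0,p^0)$, so I may freely replace $\psi$ by the adjoint states. Next I would use the averaged adjoint equation to move $u^t$ back to $u^0$ in the first slot: by (H0)(ii) and (iii) the map $s\mapsto G(t,su^t+(1-s)u^0,p^t)$ is absolutely continuous with $L^1$ derivative $d_\varphi G(t,su^t+(1-s)u^0,p^t;u^t-u^0)$, so the fundamental theorem of calculus gives
$$G(t,u^t,p^t)-G(t,u^0,p^t)=\int_0^1 d_\varphi G(t,su^t+(1-s)u^0,p^t;u^t-u^0)\,ds=0,$$
the last equality being exactly the defining property of $p^t\in Y(t,u^t,u^0)$ tested against $\hat\varphi=u^t-u^0\in E$. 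Finally, since $u^0\in E(0)$ and $G(0,u^0,\cdot)$ is affine-linear, $G(0,u^0,p^t)=G(0,u^0,p^0)$. Combining these three identities collapses the difference quotient to
$$\frac{G(t,u^t,\psi)-G(0,u^0,\psi)}{t}=\frac{G(t,u^0,p^t)-G(0,u^0,p^t)}{t},$$
in which only the explicit $t$-dependence in the first argument survives, with the frozen adjoint state $p^t$ in the third slot.

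At this point I would exploit that $G$ is real-valued. For fixed $p^t$, the function $t\mapsto G(t,u^0,p^t)$ is a differentiable map from $[0,\tau]$ into $\R$ by (H1), so the classical mean value theorem produces $\theta_t\in(0,t)$ with
$$\frac{G(t,u^0,p^t)-G(0,u^0,p^t)}{t}=\partial_t G(\theta_t,u^0,p^t).$$
It then remains to show $\partial_t G(\theta_t,u^0,p^t)\to\partial_t G(0,u^0,p^0)$ as $t\searrow 0$.

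The main obstacle is precisely this last passage to the limit: nothing guarantees that $p^t$ converges, so one cannot simply set $t=0$. I would close the argument by subsequences. Given any sequence $t_n\searrow 0$ we have $\theta_{t_n}\searrow 0$; applying (H3) to $(t_n)$ yields a subsequence $(t_{n_k})$ along which the double limit $\lim_{k\to\infty,\,s\searrow 0}\partial_t G(s,u^0,p^{t_{n_k}})=\partial_t G(0,u^0,p^0)$ holds, whence in particular $\partial_t G(\theta_{t_{n_k}},u^0,p^{t_{n_k}})\to\partial_t G(0,u^0,p^0)$. Since every sequence $t_n\searrow 0$ admits such a subsequence converging to the same limit, the full difference quotient converges and its limit equals $\partial_t G(0,u^0,p^0)$, which is \eqref{eq:dt_G_single}. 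The essential feature is that $p^t$ is never differentiated and need not even be shown to converge: hypothesis (H3) is exactly the minimal continuity of $\partial_t G$ in the adjoint variable needed to run the subsequence argument, and it is what allows the material derivative of $u^t$ to be bypassed altogether.
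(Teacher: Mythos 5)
Your proof is correct and follows the same route as the source the paper cites for this theorem (the paper itself gives no proof of Theorem~\ref{thm:sturm}, only a reference): collapse the difference quotient of $G(t,u^t,\psi)$ to $\bigl(G(t,u^0,p^t)-G(0,u^0,p^t)\bigr)/t$ via affine-linearity in the third argument, the state equation, and the averaged adjoint equation tested with $\hat\varphi=u^t-u^0$, then apply the mean value theorem in $t$ and conclude by the subsequence principle using (H3). Your reading of (H0)(ii) as $s\mapsto G(t,u^0+s(u^t-u^0),\psi)$ is the intended one (the printed formula is a typo), and your diagonal use of the joint limit in (H3) with $s=\theta_{t_{n_k}}$ is legitimate.
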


\subsection{Adjoint equation}
Introduce the Lagrangian associated to the minimization problem \eqref{opt} for all $\varphi , \psi \in H^1_{0}(D)$:  
\begin{equation}
\label{lagrangean_non}
G(\Omega,\varphi ,\psi):=\int_{\Gamma_{0}} |B_r(\varphi)-B_d|^2\,ds+\int_{D} \beta(x,|\nabla u|^2)\nabla \varphi\cdot \nabla \psi\, dx -  \langle f,\psi\rangle
\end{equation}
The adjoint state equation is obtained by differentiating $G$ with respect to $\varphi$ at $\varphi = u$ and $\psi=p$,
\[
d_{\varphi}G(\Omega,u,p;\varphi)=0 \quad  \text{ for all } \varphi \in H^1_{0}(D),
\]
or, equivalently,
\begin{equation}
\label{adj}
\begin{aligned}
&2\int_{D}\partial_\zeta\beta(x,|\nabla u|^2) (\nabla u\cdot \nabla \varphi)( \nabla u\cdot \nabla p )\,dx +\int_{D} \beta(x,|\nabla u|^2)\nabla p\cdot \nabla \varphi \,dx \\
&=-2\int_{\Gamma_{0}}(B_r(u)-B_d)B_r(\varphi)\,ds \quad \text{ for all } \varphi \in H^1_{0}(D).
\end{aligned}
\end{equation}
Introduce the mean curvature $\kappa$ of $\Gamma_{0}$ and the Laplace-Beltrami operator $\Delta_{\tau}$ on $\Gamma_0$: 
\[
\Delta_{\tau} u  :=\Delta u- \kappa\frac{\partial u}{\partial n}-\frac{\partial^2 u}{\partial n^2}
\]
Using $B_r(u) = \nabla_{\tau} u$ as well as the equalities
\begin{align*}
(\nabla u\cdot \nabla \varphi)( \nabla u\cdot \nabla p )  & = ((\nabla u\otimes\nabla u) \nabla p)\cdot \nabla \varphi, \\
\int_{\Gamma_{0}}(\nabla_{\tau}  u-B_d)\cdot  \nabla_{\tau}\varphi\,   ds  & = -\int_{\Gamma_{0}}\Delta_{\tau}  u \varphi-\varphi \kappa B_{d}\cdot n\,ds 
\end{align*}
 and Green's formula, we deduce the corresponding strong form of \eqref{adj} 
\begin{align}\label{adjoint}
\begin{split}
-\divv(\ap_1(\nabla u)\nabla p)  &= 0 \quad   \text{ in }  \Omega,\\
-\divv(\ap_2(\nabla u)\nabla p)  &= 0 \quad    \text{ in }  D\setminus{\overline{\Omega}},\\
p&=0  \quad \text{ on } \partial D,
\end{split}
\end{align}
 with the transmission conditions
\begin{align}\label{trans_cond_adjoint}
\begin{split}
\left[p\right]_{\Gamma}  &= 0  \quad  \text{ on }   \Gamma, \\
\left[\ap(\nabla u)\nabla p\cdot n\right]_{\Gamma}&=0\quad \text{ on }  \Gamma,\\
\left[\ap(\nabla u)\nabla p\cdot n\right]_{\Gamma_{0}}&=
2\left( \Delta_{\tau} u  -\kappa B_{d}\cdot n\right) \text{ on }  \Gamma_{0}, 
\end{split}
\end{align}
where
\begin{align*}
\ap(\nabla u) & := \ap_1(\nabla u)\chi_{\Omega}  + \ap_2(\nabla u)\chi_{D\setminus\Omega},\\
\ap_i(\nabla u) & := \beta_{i}(|\nabla u|^2) I_2 +2\partial_{\zeta}\beta_{i}(|\nabla u|^2) \nabla u\otimes\nabla u \in\R^{2,2}, \quad i=1,2.
\end{align*}
Note that, with this notation, the variational form of the equation can be written as
\begin{equation}
\label{adj2}
\begin{aligned}
\int_{D} \ap(\nabla u)\nabla p\cdot\nabla \varphi \,dx =  -2\int_{\Gamma_{0}}(B_r(u)-B_d)B_r(\varphi)\,ds \quad \text{ for all } \varphi \in H^1_{0}(D).                       
\end{aligned}
\end{equation}

Now let us investigate the existence of a solution for the adjoint equation
	
\begin{theorem} \label{thm:exAdjoint}
Let Assumption \ref{assump:beta}.\ref{assump:beta_4} hold.
For given $u \in H^1_0(D)$ the equation
\begin{equation}
\label{adj_thrm}
	\begin{aligned}
		&\int_{D} \ap(\nabla u)\nabla p\cdot \nabla \varphi \,dx =-2\int_{\Gamma_{0}}(B_r(u)-B_d)B_r(\varphi)\,ds   \quad \text{ for all } \varphi \in H^1_{0}(D).                       
	\end{aligned}
\end{equation}
has a unique solution $p \in H^1_0(D)$.
\end{theorem}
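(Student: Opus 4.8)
The plan is to recast \eqref{adj_thrm} as a standard linear variational problem in the unknown $p$ and to apply the Lax--Milgram lemma. Accordingly, I would introduce the bilinear form
\[
a(p,\varphi):=\int_{D}\ap(\nabla u)\nabla p\cdot\nabla\varphi\,dx
\quad\text{on }H^1_0(D)\times H^1_0(D),
\]
together with the right-hand side functional $\ell(\varphi):=-2\int_{\Gamma_0}(B_r(u)-B_d)B_r(\varphi)\,ds$. The essential observation is that $a$ inherits both coercivity and continuity directly from Assumption \ref{assump:beta}.\ref{assump:beta_4}. Indeed, each matrix $\ap_i(\nabla u)=\beta_i(|\nabla u|^2)I_2+2\partial_\zeta\beta_i(|\nabla u|^2)\,\nabla u\otimes\nabla u$ is symmetric (the outer product $\nabla u\otimes\nabla u$ is symmetric), and for every $\eta\in\R^2$ one computes
\[
\ap_i(\nabla u)\eta\cdot\eta=\beta_i(|\nabla u|^2)|\eta|^2+2\partial_\zeta\beta_i(|\nabla u|^2)(\nabla u\cdot\eta)^2,
\]
which is exactly the expression bounded in Assumption \ref{assump:beta}.\ref{assump:beta_4} upon setting $\rho=\nabla u$. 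Hence for almost every $x\in D$ the symmetric matrices $\ap_1(\nabla u(x))$ and $\ap_2(\nabla u(x))$ have all eigenvalues contained in $[\lambda,\Lambda]$, and therefore so does $\ap(\nabla u(x))$ since it coincides with one of them at each point.

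From this spectral bound the two estimates needed for Lax--Milgram follow routinely. For coercivity I would take $\varphi=p$ and use the lower bound pointwise, obtaining $a(\varphi,\varphi)\geq\lambda\int_D|\nabla\varphi|^2\,dx$; the Poincar\'e inequality on $H^1_0(D)$ then upgrades this to $a(\varphi,\varphi)\geq c\|\varphi\|_{H^1_0(D)}^2$. For continuity, the symmetry of $\ap_i(\nabla u)$ together with the eigenvalue bound gives the operator-norm estimate $|\ap(\nabla u)\xi|\leq\Lambda|\xi|$, so that the Cauchy--Schwarz inequality yields $|a(p,\varphi)|\leq\Lambda\|\nabla p\|_{L^2(D)}\|\nabla\varphi\|_{L^2(D)}\leq\Lambda\|p\|_{H^1_0(D)}\|\varphi\|_{H^1_0(D)}$.

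The delicate step, and the one I expect to be the main obstacle, is the continuity of $\ell$ on $H^1_0(D)$. The issue is that $B_r(\varphi)=\nabla_\tau\varphi$ is a tangential derivative on the one-dimensional curve $\Gamma_0$, and for a generic $\varphi\in H^1_0(D)$ the trace of $\nabla\varphi$ on $\Gamma_0$ is at best an element of $H^{-1/2}(\Gamma_0)$, so the integrand is not pointwise meaningful. The remedy is to integrate by parts along $\Gamma_0$: since $\Gamma_0$ is a smooth closed curve with no endpoints, no boundary terms appear and the tangential derivative transfers onto the smooth factor, giving $\ell(\varphi)=2\int_{\Gamma_0}\nabla_\tau(B_r(u)-B_d)\,\varphi\,ds$. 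This is well defined and bounded provided $B_r(u)-B_d\in H^{1/2}(\Gamma_0)$: because $\Gamma_0$ lies in the air gap, strictly away from the design boundary $\Gamma$ and the ferromagnetic interfaces, the coefficient is smooth in a neighbourhood of $\Gamma_0$, so interior elliptic regularity for the state equation renders $u$ sufficiently smooth there, and $B_d\in C^1(\Gamma_0)$ by hypothesis. One then estimates $|\ell(\varphi)|\leq C\|\nabla_\tau(B_r(u)-B_d)\|_{H^{-1/2}(\Gamma_0)}\|\varphi\|_{H^{1/2}(\Gamma_0)}$ and controls the trace by $\|\varphi\|_{H^{1/2}(\Gamma_0)}\leq C\|\varphi\|_{H^1_0(D)}$.

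Having established that $a$ is a continuous, coercive (indeed symmetric) bilinear form and that $\ell$ is a continuous linear functional on $H^1_0(D)$, the Lax--Milgram lemma (or, exploiting symmetry, the Riesz representation theorem) furnishes a unique $p\in H^1_0(D)$ solving \eqref{adj_thrm}, and uniqueness is immediate from coercivity. I would therefore organize the write-up as: verify the eigenvalue bound for $\ap_i$, deduce coercivity and continuity of $a$, justify the boundedness of $\ell$ via integration by parts and the trace theorem, and conclude by Lax--Milgram.
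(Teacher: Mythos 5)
Your proposal is correct and follows the same route as the paper: both verify coercivity and boundedness of the bilinear form $a(p,\varphi)=\int_D\ap(\nabla u)\nabla p\cdot\nabla\varphi\,dx$ directly from Assumption \ref{assump:beta}.\ref{assump:beta_4} (whose quadratic expression is exactly $\ap_i(\nabla u)\eta\cdot\eta$ with $\rho=\nabla u$), upgrade coercivity via Poincar\'e's inequality, and conclude by Lax--Milgram. The one point where you go beyond the paper is the continuity of the right-hand side functional: the paper dismisses this as ``obviously'' continuous, whereas you correctly note that $B_r(\varphi)=\nabla_\tau\varphi$ is only an $H^{-1/2}(\Gamma_0)$ object for general $\varphi\in H^1_0(D)$, so the integral must be read as a duality pairing, justified by interior elliptic regularity of $u$ near $\Gamma_0$ (which lies away from the material interfaces) and $B_d\in C^1(\Gamma_0)$ --- a refinement that makes the argument genuinely complete.
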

\begin{proof}
For fixed $u\in H^1_0(D)$, define the bilinear form 
\begin{align*}
	a'(u;\cdot, \cdot): H^1_0(D) \times H^1_0(D) &   \rightarrow \mathbb R \\
	 (v,w) &     \mapsto \int_{D}  \ap(\nabla u) \nabla v\cdot \nabla w\,dx .
\end{align*}
We check the conditions of Lax-Milgram's theorem. The ellipticity of the bilinear form $a'(u; \cdot, \cdot)$ can be seen as follows:
\begin{align*}
	a'(u; v,v) &= \int_{D}  \ap(\nabla u) \nabla v\cdot \nabla v \,dx \geq \lambda \int_D |\nabla v |^2 \mbox d x \geq \lambda \, C \lVert v \rVert_{H^1(D)}^2
\end{align*}
where we have used the first estimate in Assumption \ref{assump:beta}.\ref{assump:beta_4}. and Poincar\'e's inequality since $v \in H^1_0(D)$.
The boundedness of the bilinear form $a'(u; \cdot, \cdot)$ can be seen by H\"older's inequality and again Assumption \ref{assump:beta}.\ref{assump:beta_4}.
The right-hand side is obviously a linear and continuous functional on $H^1_0(D)$,
\begin{align*}
	\langle F_u, \varphi \rangle = -2\int_{\Gamma_{0}}(B_r(u)-B_d)B_r(\varphi)\,ds,
\end{align*}
thus the theorem of Lax-Milgram yields the existence of a unique solution $p$ to the variational problem
\begin{align*}
	a'(u; \varphi, p) = \langle F_u, \varphi \rangle	\quad \text{ for all } \varphi \in H^1_0(D).
\end{align*}
\end{proof}

\section{Shape derivative of the cost function} \label{sec:shapeDerivativeCostFunc}
In this section we prove that the cost function $J$ given by \eqref{opt} is shape differentiable in the sense of Definition \ref{def1}. Moreover, we derive a domain expression of the shape derivative. To be more precise, Theorem~\ref{thm:sturm} is applied to show Theorem \ref{thm1}. Anticipating on the application of Section \ref{sec:numerics}, we assume in what follows that $f$ has the form
 \[
 f = f_0 + \operatorname{div}M\mbox{ with }M = M_1\chi_{\Omega_{\text{ma}}}(x)+ M_2\chi_{D\setminus \Omega_{\text{ma}}}(x)
 \]
where $f_0\in H^1(D)$.

In this section we assume $\Omega\subset \Omega^{\text{ref}}$, $V\in \mathcal{D}^1(\R^2,\R^2)$ 
and $\operatorname{supp}(V)\cap \Gamma_0 = \emptyset$.
Denote $\Omega^{\text{ref}}_k$, $k=1,..,8$, the connected components of $\Omega^{\text{ref}}$ (see Fig. \ref{fig1}). 
Introduce $\Gamma^{\text{ref}}$ the boundary of $\Omega^{\text{ref}}$. 
The four sides of $\Gamma^{\text{ref}}$ are denoted 
$\Gamma_k^{\text{ref},N},\Gamma_k^{\text{ref},W},\Gamma_k^{\text{ref},E},\Gamma_k^{\text{ref},S}$
where the exponents mean ``north'', ``south'', ``east'', ``west'', respectively. We assume $V\cdot n = 0$ on $\Gamma^{\text{ref},S}_k $ and $V\cdot n \leq 0$ on $\Gamma^{\text{ref},E}_k \cup \Gamma^{\text{ref},W}_k \cup \Gamma^{\text{ref},N}_k$. These conditions guarantee that $\Omega_t:=T_t(\Omega)\subset\Omega^{\text{ref}}$. In addition, we assume that the vector field $V$ is such that the transformation $T_t$ satisfies $T_t(\Oma) = \Oma$ for $t$ small enough. 

\begin{theorem}\label{thm1} 
Let $\beta_{1}$ and $\beta_{2}$ satisfy Assumption \ref{assump:beta}. Then the functional $J$ is shape differentiable and its shape derivative in the direction $V$ is given by 

\begin{equation}
\label{volform}
\begin{aligned}
dJ(\Omega;V) =&  - \int_{D} (f_0 \operatorname{div}(V) +   \nabla f_0\cdot V) p\,dx \\
& +\int_{\Oma}   \PP'(0) \nabla p \cdot M_1  \, dx + \int_{D\setminus\overline{\Oma}}   \PP'(0) \nabla p \cdot M_2   \, dx\\
& +\int_{D} \beta_\Omega(x,|\nabla u|^2) \QQ'(0)\nabla u\cdot \nabla p \,dx\\
&-\int_{D} 2 \partial_\zeta \beta_\Omega(x,|\nabla u|^2) (D V^T\nabla u \cdot \nabla u) (\nabla u\cdot \nabla p)\, dx\\
\end{aligned}
\end{equation}
where $\PP'(0)= (\divv V) I_2-DV^T$, $\QQ'(0)= (\divv V) I_2-DV^T-DV$, $I_2 \in \mathbb R^{2,2}$ is the identity matrix, and
 $u,p\in H^1_0(D)$ are respectively the solutions of the problems \eqref{state}-\eqref{trans_cond_state} and \eqref{adjoint}-\eqref{trans_cond_adjoint}. 
 \end{theorem}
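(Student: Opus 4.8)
The plan is to apply the abstract averaged-adjoint result, Theorem~\ref{thm:sturm}, to a Lagrangian transported onto the fixed domain $D$. First I would pull back the perturbed problem on $\Omega_t := T_t(\Omega)$ to $D$ via the diffeomorphism $T_t$. Writing $A(t) := DT_t^{-T}$ and $\xi(t) = \det DT_t$, and using $\nabla(\varphi\circ T_t) = DT_t^T(\nabla\varphi)\circ T_t$ together with $T_t(\Oma)=\Oma$ (so that the piecewise-constant data $M$ and the coefficient $\beta_\Omega$ are invariant under the pull-back) and $\operatorname{supp}(V)\cap\Gamma_0 = \emptyset$ (so that the tracking term is unaffected), I would define
\begin{equation*}
G(t,\varphi,\psi) := \int_{\Gamma_0}|B_r(\varphi)-B_d|^2\,ds + \int_D \beta_\Omega(x,|A(t)\nabla\varphi|^2)\,A(t)\nabla\varphi\cdot A(t)\nabla\psi\,\xi(t)\,dx - \ell_t(\psi),
\end{equation*}
where $\ell_t$ is the right-hand side $\langle f,\cdot\rangle$ transported by $T_t$. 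By construction $u^t := u_t\circ T_t$ is the unique element of $E(t)$ and $J(\Omega_t,u_t) = G(t,u^t,\psi)$ for every $\psi$, so that $dJ(\Omega;V) = \frac{d}{dt}G(t,u^t,\psi)|_{t=0}$.

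Next I would verify the hypotheses. Single-valuedness of $E(t)$, i.e. (H0)(i), follows from a transported analogue of Theorem~\ref{thm:exState}: for $t$ small, $A(t)$ is a small perturbation of the identity and $\xi(t)>0$, so the strong monotonicity and Lipschitz properties of Assumption~\ref{assump:beta}.\ref{assump:beta_2} persist. Items (H0)(ii)–(iv) are the affine and absolute-continuity statements, which hold because $\psi$ enters linearly and $s\mapsto\beta_\Omega(\cdot,|A(t)\nabla\varphi_s|^2)$ is smooth along the segment $\varphi_s = su^t+(1-s)u^0$; (H1) is the $t$-differentiability of the transported coefficients, which follows from smoothness of $t\mapsto T_t$.

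The heart of the argument is (H2), unique solvability of the averaged adjoint equation \eqref{averated_}. Linearising $G$ in $\varphi$ shows that the associated bilinear form evaluated at $\hat\varphi=q$ has integrand
\begin{equation*}
\int_0^1\!\!\int_D\Bigl[\beta_\Omega(x,|A\nabla\varphi_s|^2)\,|A\nabla q|^2 + 2\,\partial_\zeta\beta_\Omega(x,|A\nabla\varphi_s|^2)\,(A\nabla\varphi_s\cdot A\nabla q)^2\Bigr]\xi\,dx\,ds,
\end{equation*}
which is precisely the quantity bounded below in Assumption~\ref{assump:beta}.\ref{assump:beta_4} with $\eta = A\nabla q$ and $\rho = A\nabla\varphi_s$; combined with $\xi>0$ and Poincaré's inequality this yields coercivity, and Lax--Milgram (exactly as in Theorem~\ref{thm:exAdjoint}) gives a unique $p^t$. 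At $t=0$ the averaged adjoint collapses to \eqref{adj2}, so $u^0=u$ and $p^0=p$. I expect (H3) — the convergence $\partial_t G(s,u^0,p^{t_{n_k}})\to\partial_t G(0,u^0,p^0)$ — to be the main obstacle: one must use the \emph{uniform} coercivity constant $\lambda$ to bound $\{p^t\}$ in $H^1_0(D)$, extract a weakly convergent subsequence, and argue it converges strongly enough to pass to the limit in the $t$-derivative, which is the step that genuinely exploits the nonlinear structure.

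Finally, Theorem~\ref{thm:sturm} reduces the computation to $\partial_t G(0,u,p)$. Differentiating the $t$-dependent factors at $t=0$ via $\xi'(0)=\divv V$, $\frac{d}{dt}A(t)|_{0}=-DV^T$, and $\frac{d}{dt}(f_0\circ T_t)|_0 = \nabla f_0\cdot V$ produces the four groups in \eqref{volform}: the source contribution $-\int_D(f_0\divv V+\nabla f_0\cdot V)p\,dx$ and, from $\ell_t'(0)$ on the $M$-part, the terms with $\PP'(0)=\xi'(0)I_2 - DV^T$; the quadratic term $\beta_\Omega\QQ'(0)\nabla u\cdot\nabla p$ from differentiating $\xi(t)A(t)^TA(t)$, where symmetry of $\QQ'(0)=(\divv V)I_2 - DV^T - DV$ lets me write $\nabla u\cdot\QQ'(0)\nabla p = \QQ'(0)\nabla u\cdot\nabla p$; and the nonlinear term $-2\partial_\zeta\beta_\Omega(DV^T\nabla u\cdot\nabla u)(\nabla u\cdot\nabla p)$ from differentiating the argument $|A(t)\nabla u|^2$ of $\beta_\Omega$. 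Collecting these yields \eqref{volform}.
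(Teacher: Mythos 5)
Your overall strategy is the paper's own: pull the Lagrangian back to the fixed domain via $T_t$, verify (H0)--(H3) of Theorem~\ref{thm:sturm}, and read off \eqref{volform} from $\partial_t\mathfrak{G}(0,u,p)$ using $\xi'(0)=\divv V$ and $\BB'(0)=-DV^T$. Your treatment of (H0), (H1), (H2) and the final differentiation matches the paper's, including the identification of the averaged-adjoint coercivity integrand with Assumption~\ref{assump:beta}.\ref{assump:beta_4} applied to $\eta=\BB(t)\nabla q$ and $\rho=\BB(t)\nabla u^s_t$, and the use of $T_t(\Oma)=\Oma$ and $\operatorname{supp}(V)\cap\Gamma_0=\emptyset$ to keep the data and the tracking term fixed.

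The gap is exactly where you flag it: (H3). You reduce it to bounding $(p^t)$ and identifying its weak limit, but you leave open the one ingredient that makes the identification possible, namely the strong convergence $u^t\to u$ in $H^1_0(D)$ as $t\searrow 0$. Weak convergence of $p^{t_{n_k}}$ is already enough to pass to the limit in $\partial_t\mathfrak{G}(s,u^0,p^{t_{n_k}})$, since that map is affine and weakly continuous in $\psi$; the real work is to pass to the limit in the averaged adjoint equation \eqref{eq:linearization_transm_in_u} so as to conclude that the weak limit is the adjoint state $p$. That equation contains $\beta_\Omega(\cdot,|\BB(t)\nabla u^s_t|^2)$ and $\partial_\zeta\beta_\Omega(\cdot,|\BB(t)\nabla u^s_t|^2)\,\BB(t)\nabla u^s_t\otimes\BB(t)\nabla u^s_t$ nonlinearly in $\nabla u^s_t$, so weak convergence of $u^t$ does not suffice. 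The paper supplies the missing piece in Lemma~\ref{lem:mate_der_trans}, whose proof is not a soft perturbation argument: it reuses the Lipschitz dependence of the solution on the characteristic function $\chi_\Omega$ in $L^1(D)$ established in the existence section (Remark~\ref{rem:lipschitz}, itself resting on the monotonicity Lemma~\ref{lemma1} and a slightly-better-than-$H^1$ regularity assumption), combined with $\|\chi_\Omega-\chi_\Omega\circ T_t^{-1}\|_{L^1(D)}\to 0$ and norm equivalences under composition with $T_t$. Without citing or reproving that continuity result, your verification of (H3) is incomplete; everything else in your argument is sound.
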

\begin{remark}
Note that the last integral in \eqref{volform} is well-defined thanks to Assumption~\ref{assump:beta}.\ref{assump:beta_4}.
To see this, note that $V\in C^1(\overline D, \R^2)$, and that, for all $\zeta\in \R^2$, we have
$ \beta'(|\zeta|^2)|\zeta|^2 \le \Lambda$.
Hence 
\ben
\begin{split}
\left\lvert \int_{D} 2 \partial_\zeta \beta_\Omega(x,|\nabla u|^2) (D V^T\nabla u \cdot \nabla u) (\nabla u\cdot \nabla p)\, dx \right\rvert &\le C \int_{D} \partial_\zeta \beta_\Omega(x,|\nabla u|^2)|\nabla u|^2 |\nabla u\cdot \nabla p|\, dx \\
        & \le C \Lambda \int_{D} |\nabla u\cdot \nabla p|\, dx <\infty .
\end{split}
\een
The other terms in \eqref{volform} are obviously well-defined.
\end{remark}
\noindent {\bf Proof of Theorem \ref{thm1}.}
Let us consider the transformation $T_{t}$ defined by \eqref{transf} with $V\in \mathcal{D}^1(D,\R^2)$. 
In this case, $T_{t}(D)=D$, but, in general, $T_{t}(\Omega):=\Omega_{t}\neq \Omega$. We define the Lagrangian $G(\Omega_{t},\varphi,\psi)$ at the transformed domain $\Omega_t$ for all $\varphi,\psi$ in $H^1_0(D)$: 
\[
\begin{aligned}
G(\Omega_{t},\varphi,\psi) &=\int_{\Gamma_{0}} |B_{r}(\varphi )-B_d|^2\;ds+\int_{D} \beta_{\Omega_t}(x, |\nabla \varphi  |^2)\nabla \varphi \cdot \nabla \psi \;dx  - \langle f,\psi \rangle  .
\end{aligned}
\]
Since 
\[
 f = f_0 + \operatorname{div}M\mbox{ with }M = M_1\chi_{\Oma}(x)+ M_2\chi_{D\setminus\overline\Oma}(x)
\]
where $M_1$ and $M_2$ are constant vectors, we transform the last term in $G$ to
\begin{align*}
\langle f,\psi \rangle  & = \int_D f_0 \psi + \langle \divv M,\psi \rangle  = \int_D f_0 \psi  - \int_D M\cdot\nabla\psi,
\end{align*}
which yields
\begin{align*}
G(\Omega_{t},\varphi,\psi) &=\int_{\Gamma_{0}} |B_{r}(\varphi )-B_d|^2\;ds+\int_{D} \beta_{\Omega_t}(x, |\nabla \varphi  |^2)\nabla \varphi \cdot \nabla \psi \;dx  \\
& - \int_D f_0 \psi  + \int_{\Oma} M_1\cdot\nabla\psi  + \int_{D\setminus\overline\Oma} M_2\cdot\nabla\psi.
\end{align*}
In order to differentiate $G(\Omega_{t},\varphi,\psi)$ with respect to $t$, the integrals in $G(\Omega_t,\varphi,\psi)$ need to be transported back on the reference domain $\Omega$ using the transformation $T_t$. However, composing by $T_t$ inside the integrals creates terms $\varphi\circ T_t$ and $\psi\circ T_t$ which might be non-differentiable. To avoid this problem, we need to parameterize the space $H^1_{0}(D)$ by composing the elements of $H^1_{0}(D)$ with $T_t^{-1}$. Following this argument, we introduce  
\ben
\label{lagrangeant} 
\mathfrak{G}(t,\varphi,\psi ):=G(\Omega_{t}, \varphi\circ T^{-1}_{t},\psi\circ T^{-1}_{t} ).
\een
In \eqref{lagrangeant}, we proceed to the change of variable $x = T_t(\bar x)$. 
This yields 
\ben\label{lagrangean_non_repa}
\begin{split}
\mathfrak{G}(t,\varphi,\psi)&=
    \int_{\Gamma_{0}} \xi_\tau(t)|B_{r}(\varphi)-B_d\circ T_{t}|^2\;ds\\
&  +\int_{D} \beta_{\Omega}(x,|\BB (t)\nabla \varphi|^2)\BB (t)\nabla \varphi\cdot \BB (t)\nabla \psi\, \xi(t) \;dx \\
& - \int_{D}f_{0}\circ T_t \psi \xi(t) \;dx  
+   \int_{\Oma} M_1 \cdot \BB(t) \nabla\psi  \xi(t)\, dx\\
& +   \int_{D\setminus\overline{\Oma}} M_2 \cdot \BB(t) \nabla\psi\, \xi(t)\, dx,
\end{split}
\een
where $\BB (t)= DT_{t}^{-T}$ and $\xi(t),\xi_\tau(t)$ are defined in \eqref{JacobiDet} and \eqref{tangJacobiDet}, respectively. Note that we have used the assumption $T_{t}(\Oma) = \Oma$ in the computation of $\mathfrak{G}(t,\varphi,\psi)$.

Note that $J(\Omega_t)=\mathfrak{G}(t, u^t, \psi)$ for all $\psi \in H^1_0(D)$, where $u^t\in H^1_0(D)$ solves 
\ben\label{eq:for_u_t_trans_problem}
\begin{split}
\int_D & \beta_{\Omega}(x,|\BB (t)\nabla u^t|^2)\BB (t)\nabla u^t \cdot  \BB (t)\nabla \psi \xi(t)  \;dx   \\
& =\int_D  f_{0}\circ T_{t} \psi \xi(t) \;dx 
- \int_{\Oma} M_1 \cdot \BB(t) \nabla\psi   \xi(t)\, dx- \int_{D\setminus\overline\Oma} M_2 \cdot \BB(t) \nabla\psi  \xi(t)\, dx.
\end{split}
\een
To prove Theorem \ref{thm1}, we need to check the conditions of Theorem~\ref{thm:sturm} for the function $\mathfrak{G}(t,\varphi,\psi)$ with $E=F=H^1_0(D)$.

{\bf Verification of (H0).} Condition (H0)(i) is satisfied since $E(t)=\{u^t\}$, where $u^t\in H^1_0(D)$ is the solution of the state equation \eqref{eq:for_u_t_trans_problem}.
Conditions (ii) and (iii) of Assumption (H0) are also satisfied due to the differentiability of the functions $\beta_1,\beta_2$ and
Assumption 1. Condition (H0)(iv) is satisfied by construction. 

{\bf Verification of (H1).} Condition (H1) is satisfied since $\BB (t)$, $\xi(t)$ and $\xi_\tau(t)$ are smooth. 

{\bf Verification of (H2).} $Y(t,u^t,u^0)=\{p^t\}$, where $p^t\in H^1_0(D)$ is the unique solution of 
\ben\label{eq:linearization_transm_in_u}
\begin{split}
\int_0^1\int_{D}& 2 \partial_{\zeta}\beta_{\Omega}(x,|\BB (t)\nabla u^s_t|^2) ((\BB (t)\nabla u^s_t \otimes \BB (t)\nabla u^s_t)\BB (t)\nabla p^t)\cdot \BB (t)\nabla\psi) \xi(t)\;dx\,ds\\
                      &    +\int_0^1\int_{D} \beta_{\Omega}(x,|\BB (t)\nabla u^s_t|^2)\BB (t)\nabla \psi\cdot \BB (t)\nabla p^t \xi(t)\;dx\,ds\\
                             &= - \int_0^1\int_{\Gamma_0} 2(B_{r}(u^s_t)-B_d)B_{r}(\psi) \xi(t) \;dx\,ds \text{ for all } \psi \in H^1_0(D).
\end{split}
\een
which can also be rewritten in a more compact way as
\ben\label{eq:linearization_transm_in_u2}
\begin{split}
\int_0^1\int_{D}& \mathcal{A}(\BB (t) \nabla u^s_t ) \BB (t) \nabla p^t\cdot \BB (t)\nabla \psi \xi(t)\;dx\,ds  \\
& = - \int_0^1\int_{\Gamma_0} 2(B_{r}(u^s_t)-B_d)B_{r}(\psi) \xi(t) \;dx\,ds.
\end{split}
\een
To prove that the previous equation has indeed a unique solution, we first check that all integrals are finite in the previous 
equation. To verify this, we use H\"older's inequality to obtain 
\begin{equation*}
\begin{split}
\int_{D}&2 \partial_{\zeta}\beta_{\Omega}(x,|\BB (t)\nabla u^s_t|^2) ((\BB (t)\nabla u^s_t \otimes \BB (t)\nabla u^s_t)\BB (t)\nabla p^t)\cdot \BB (t)\nabla\psi) \xi(t)\;dx \\
  \le  c   &\left(\int_D 2\partial_{\zeta} \beta_{\Omega}(x,|\BB (t)\nabla u^s_t|^2) (\BB (t)\nabla u^s_t\cdot \BB (t)\nabla p^t)^2 \,dx \right)^{1/2}\cdot\\
  &\qquad \left(\int_D 2\partial_{\zeta}\beta_{\Omega}(x,|\BB (t)\nabla u^s_t|^2)(\BB (t)\nabla u^s_t\cdot \BB (t)\nabla\psi)^2 \;dx \right)^{1/2}\\ 
 \end{split}
\end{equation*}
and 
\begin{equation*}
\begin{split}
 \int_{D}& \beta_{\Omega}(x,|\BB (t)\nabla u^s_t|^2)\BB (t)\nabla \psi\cdot \BB (t)\nabla p^t \xi(t)\;dx\\
 &\le c\left(\int_{D} \beta_{\Omega}(x,|\BB (t)\nabla u^s_t|^2)|\BB (t)\nabla \psi|^2 \,dx \right)^{1/2} \left(\int_{D} \beta_{\Omega}(x,|\BB (t)\nabla u^s_t|^2) | \BB (t)\nabla p^t|^2 \;dx\right)^{1/2}.
\end{split}
\end{equation*}
Adding both equations and using part \ref{assump:beta_4} of Assumption \ref{assump:beta}, we get
\ben
\begin{split}
\int_{D}& \mathcal{A}(\BB (t) \nabla u^s_t ) \BB (t) \nabla p^t\cdot \BB (t)\nabla \psi \xi(t)\;dx\,ds \le c \|\psi\|_{H^1(D)}\|\bar p^t\|_{H^1(D)},
\end{split}
\een
where the constant $c>0$ is independent of $s$.

The existence of a solution $p^t$ follows from Theorem \ref{thm:exAdjoint}. Since $\BB (t) = DT_t^{-T}$, there are numbers $C>0$ and $\tau>0$ such that, for all $t\in [0,\tau]$ and $\rho\in \R^2$, we have 
$\BB (t)\rho\cdot \rho\ge C |\rho|^2$.
Note that $p^0=p\in Y(0,u^0,u^0)$ is the unique solution of the adjoint equation \eqref{adjoint}-\eqref{trans_cond_adjoint}. 

{\bf Verification of (H3).} To verify this assumption we show that there is a sequence $(p^{t_k})_{k\in \N}$, where $\{p^{t_k}\} = Y(t_k,u^{t_k},u^0)$, which converges weakly in $H^1_0(D)$ to the solution of the adjoint equation and that $(t,\psi)\mapsto \partial_t \mathfrak G(t,u^0,\psi)$ is weakly continuous.
In order to prove this, we need the following lemmas. 

\begin{lemma}
\label{shape-tools}
Let $m\in \{0,1\}$ and the velocity field $V\in \mathcal{D}^1(\R^2,\R^2)$ be given and $\varphi\in H^m(\R^2)$.
We denote by $T_{t}$, the transformation associated to $V$. Then we have 
\[
\lim_{t\to 0}\varphi\circ T_{t}=\varphi \text{ and }\lim_{t\to 0}\varphi\circ T^{-1}_{t}=\varphi \quad \text{ in } H^m(D).
\]
\end{lemma}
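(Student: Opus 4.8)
The plan is to prove both limits first for smooth, compactly supported functions and then extend to arbitrary $\varphi \in H^m(\R^2)$ by a density argument resting on the uniform boundedness of the composition operators $C_t\colon \varphi \mapsto \varphi \circ T_t$ on $H^m$. Since the flow $T_t$ and its inverse $T_t^{-1}$ enjoy identical uniform estimates, it suffices to carry out the argument for $T_t$ and repeat it verbatim for $T_t^{-1}$.

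First I would record the uniform-in-$t$ bounds. Because $V\in\mathcal{D}^1(\R^2,\R^2)$, the flow depends smoothly on $t$, satisfies $T_0=\mathrm{id}$ and $T_t(D)=D$, and on the compact interval $[0,\tau]$ the matrices $DT_t$, $DT_t^{-1}$ and the Jacobian $\xi(t)=\det DT_t$ are bounded, with $\xi(t)$ bounded away from zero, uniformly in $t$ and $x$. A change of variables $y=T_t(x)$ combined with these bounds gives a constant $C$ independent of $t\in[0,\tau]$ such that
\[
\|\varphi\circ T_t\|_{H^m(D)}\le C\,\|\varphi\|_{H^m(\R^2)},
\]
where for $m=1$ I use the chain rule $\nabla(\varphi\circ T_t)=DT_t^T(\nabla\varphi)\circ T_t$ from the preceding lemma to estimate the gradient term. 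The same inequality holds with $T_t$ replaced by $T_t^{-1}$.

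Next I would establish the convergence on the dense subspace $C_c^\infty(\R^2)\subset H^m(\R^2)$. For such a $\psi$, the uniform convergence $T_t\to\mathrm{id}$ on compact sets together with the compact support of $\psi$ yields $\psi\circ T_t\to\psi$ uniformly, hence in $L^2(D)$, which settles the case $m=0$. For $m=1$, the identity $\nabla(\psi\circ T_t)=DT_t^T(\nabla\psi)\circ T_t$, the uniform convergence $DT_t\to I_2$, and $(\nabla\psi)\circ T_t\to\nabla\psi$ in $L^2(D)$ give $\nabla(\psi\circ T_t)\to\nabla\psi$ in $L^2(D)$, so $\psi\circ T_t\to\psi$ in $H^1(D)$.

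Finally I would combine the two ingredients by a standard $3\delta$ argument: given $\varphi\in H^m(\R^2)$ and $\delta>0$, choose $\psi\in C_c^\infty(\R^2)$ with $\|\varphi-\psi\|_{H^m(\R^2)}$ small and write
\[
\|\varphi\circ T_t-\varphi\|_{H^m(D)}\le \|(\varphi-\psi)\circ T_t\|_{H^m(D)}+\|\psi\circ T_t-\psi\|_{H^m(D)}+\|\psi-\varphi\|_{H^m(D)}.
\]
The first term is controlled by $C\|\varphi-\psi\|_{H^m(\R^2)}$ via the uniform boundedness, the last term is small by the choice of $\psi$, and the middle term tends to zero as $t\to 0$ by the smooth case; letting $t\to 0$ and then $\delta\to 0$ gives the first claim, and the identical argument for $T_t^{-1}$ gives the second. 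The main obstacle is the uniform-in-$t$ boundedness of the operators $C_t$ on $H^m$; once these bounds are in place, density reduces the whole statement to the elementary convergence for smooth functions.
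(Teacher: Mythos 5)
Your proof is correct. The paper itself gives no argument for this lemma --- it simply refers to \cite{DZ2} --- and the standard proof found there is exactly the one you outline: uniform-in-$t$ bounds on the composition operators via the change of variables $y=T_t(x)$ (using that $\xi(t)$ is bounded and bounded away from zero and that $T_t(D)=D$), convergence for $\psi\in C_c^\infty(\R^2)$ from the uniform convergence of $T_t$ and $DT_t$ together with the chain rule $\nabla(\psi\circ T_t)=DT_t^T(\nabla\psi)\circ T_t$, and a density argument to conclude. Since $T_t^{-1}$ satisfies the same uniform estimates, your remark that the second limit follows verbatim is also fine; there is no gap.
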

\begin{proof}
 See for instance \cite{DZ2}. 
 \end{proof}

Recall that according to Remark~\ref{rem:lipschitz} there are constants $C,\ve>0$ such that 
\begin{equation}\label{eq:lipschitz}
\| u(\chi_{1})-u(\chi_{2})\|_{H^1(D)}\leq \| \chi_{1}-\chi_{2} \|_{L^1(D)},\quad  \forall  \chi_{1}, \chi_{2}\in \Xi(D),
\end{equation}
where 
\[
\Xi(D):=\left\{\chi_{\Omega}:  \Omega \subset D\text{ is measurable and }  \chi_{\Omega}(1-\chi_{\Omega})=0 \text{  a.e. in } D\right\}.
\]
With this result it is easy to see that $t\mapsto u^t$ is in fact continuous.
\begin{lemma}\label{lem:mate_der_trans}
Let Assumptions~\ref{assump:beta}.\ref{assump:beta_1} to \ref{assump:beta}.\ref{assump:beta_3} hold. Then the mapping $t\mapsto u^t:=\Psi_t(u_t)$ is continuous from the right in $0$, i.e., 
$$ \lim_{t\searrow 0} \|u^t -  u\|_{H^1_0(D)} =0. $$
If in addition Assumption~\ref{assump:beta}.\ref{assump:beta_4} is satisfied, then there are constants $c,\tau>0$ such that
$$\|u^t-u\|_{H^1_0(D)} \le tc, \quad \text{ for all } t\in [0,\tau].  $$
\end{lemma}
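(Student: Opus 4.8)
\noindent\textbf{Proof of Lemma~\ref{lem:mate_der_trans} (plan).}
The plan is to read \eqref{eq:for_u_t_trans_problem} as a family of strongly monotone problems indexed by $t$, to test the difference of the equations for $u^t$ and $u=u^0$ against $\psi=u^t-u$, and to absorb the $t$-dependence of the coefficients into perturbation terms controlled by Lemma~\ref{shape-tools} and the smoothness of $\xi(t),\BB(t)$. Writing $\Phi(x,\rho):=\beta_\Omega(x,|\rho|^2)\rho$, Lemma~\ref{lemma1} gives, for a.e.\ $x$, the pointwise strong monotonicity $(\Phi(x,p)-\Phi(x,q))\cdot(p-q)\ge m|p-q|^2$ for all $p,q\in\R^2$. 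I introduce the forms $a_t(w;\psi):=\int_D \Phi(x,\BB(t)\nabla w)\cdot\BB(t)\nabla\psi\,\xi(t)\,dx$ and the functionals $\ell_t$ given by the right-hand side of \eqref{eq:for_u_t_trans_problem}, so that $u^t$ solves $a_t(u^t;\psi)=\ell_t(\psi)$ and $u$ solves $a_0(u;\psi)=\ell_0(\psi)$ for all $\psi\in H^1_0(D)$; existence and uniqueness of $u^t$ follow from Theorem~\ref{thm:exState} on $\Omega_t$ together with the pullback $\Psi_t$. Taking $\psi=u^t-u$ and setting $p=\BB(t)\nabla u^t$, $q=\BB(t)\nabla u$, the monotonicity of $\Phi$ yields
\[
a_t(u^t;u^t-u)-a_t(u;u^t-u)\ge m\int_D|\BB(t)\nabla(u^t-u)|^2\,\xi(t)\,dx\ge C_1\|u^t-u\|_{H^1_0(D)}^2,
\]
where, for $t\in[0,\tau]$ small, $C_1>0$ is independent of $t$ because $\BB(t)=DT_t^{-T}$ is uniformly invertible (indeed $\BB(t)\rho\cdot\rho\ge c|\rho|^2$) and $\xi(t)\ge\xi_0>0$, and Poincar\'e's inequality applies. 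Inserting $a_t(u^t;\cdot)=\ell_t$ and $a_0(u;\cdot)=\ell_0$ into the algebraic identity $a_t(u^t;\psi)-a_t(u;\psi)=(\ell_t-\ell_0)(\psi)-(a_t(u;\cdot)-a_0(u;\cdot))(\psi)$ gives
\[
C_1\|u^t-u\|_{H^1_0(D)}^2\le\big(\|\ell_t-\ell_0\|_{H^{-1}(D)}+\|a_t(u;\cdot)-a_0(u;\cdot)\|_{H^{-1}(D)}\big)\,\|u^t-u\|_{H^1_0(D)}.
\]

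For the right-continuity it then suffices to show that both dual norms tend to $0$. For $\ell_t-\ell_0$, a Cauchy--Schwarz estimate reduces matters to $\|f_0\circ T_t\,\xi(t)-f_0\|_{L^2(D)}$ and $\|\BB(t)\xi(t)-I_2\|_{L^\infty(D)}$, both of which tend to $0$ by Lemma~\ref{shape-tools} (with $m=0$) and the smoothness of $\xi(t),\BB(t)$ with $\xi(0)=1$, $\BB(0)=I_2$. For $a_t(u;\cdot)-a_0(u;\cdot)=\int_D(g_t-g_0)\cdot\nabla\psi\,dx$ with $g_t:=\xi(t)\BB(t)^T\Phi(x,\BB(t)\nabla u)$ and $g_0=\Phi(x,\nabla u)$, the pointwise a.e.\ convergence $\BB(t)\nabla u\to\nabla u$ together with the domination $|\Phi(x,\BB(t)\nabla u)|\le c_2|\BB(t)\nabla u|\le C|\nabla u|$ from Assumption~\ref{assump:beta}.\ref{assump:beta_1} gives $g_t\to g_0$ in $L^2(D)$ by dominated convergence. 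Dividing the last display by $\|u^t-u\|_{H^1_0(D)}$ proves $\|u^t-u\|_{H^1_0(D)}\to0$ as $t\searrow0$.

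Under the additional Assumption~\ref{assump:beta}.\ref{assump:beta_4} I upgrade both convergences to the rate $O(t)$. The Jacobian $D_\rho\Phi(x,\rho)=\beta_i(|\rho|^2)I_2+2\beta_i'(|\rho|^2)\rho\otimes\rho$ is symmetric with eigenvalues in $[\lambda,\Lambda]$ by Assumption~\ref{assump:beta}.\ref{assump:beta_4}, so $\rho\mapsto\Phi(x,\rho)$ is globally $\Lambda$-Lipschitz. Using $\tfrac{d}{dt}(f_0\circ T_t)=(\nabla f_0\cdot V)\circ T_t$ and $f_0\in H^1(D)$ gives $\|f_0\circ T_t-f_0\|_{L^2(D)}=O(t)$, while $\|\xi(t)-1\|_{L^\infty(D)}$ and $\|\BB(t)-I_2\|_{L^\infty(D)}$ are $O(t)$ by smoothness, whence $\|\ell_t-\ell_0\|_{H^{-1}(D)}=O(t)$. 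Splitting
\[
g_t-g_0=\xi(t)\BB(t)^T\big(\Phi(x,\BB(t)\nabla u)-\Phi(x,\nabla u)\big)+\big(\xi(t)\BB(t)^T-I_2\big)\Phi(x,\nabla u)
\]
and bounding the first term by $\Lambda\|\BB(t)-I_2\|_{L^\infty(D)}|\nabla u|$ and the second by $\|\xi(t)\BB(t)^T-I_2\|_{L^\infty(D)}|\Phi(x,\nabla u)|$, both $O(t)$ in $L^2(D)$ since $\nabla u\in L^2(D)$, yields $\|a_t(u;\cdot)-a_0(u;\cdot)\|_{H^{-1}(D)}=O(t)$. The coercivity estimate then gives $\|u^t-u\|_{H^1_0(D)}\le tc$ for $t\in[0,\tau]$.

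The main obstacle is that the whole nonlinear operator depends on $t$, so no single bilinear form can be frozen; the device resolving this is the double splitting into the frozen-$t$ comparison $a_t(u^t;\cdot)-a_t(u;\cdot)$, where the strong monotonicity of $\Phi$ (Lemma~\ref{lemma1}) applies after the substitution $p=\BB(t)\nabla u^t$, $q=\BB(t)\nabla u$, and the frozen-argument comparison $a_t(u;\cdot)-a_0(u;\cdot)$, in which only the smooth coefficients $\xi(t),\BB(t)$ and the shift $f_0\circ T_t$ move. The secondary technical point is extracting global $\Lambda$-Lipschitz continuity of $\rho\mapsto\Phi(x,\rho)$ from the quadratic-form bound of Assumption~\ref{assump:beta}.\ref{assump:beta_4}, which is exactly what promotes the right-continuity to the linear rate $O(t)$.
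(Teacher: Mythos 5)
Your proof is correct, but it takes a genuinely different route from the paper's. The paper proves the continuity statement by going through the \emph{untransported} problem on $\Omega_t$: it invokes the Lipschitz dependence of the solution on the characteristic function established in Section \ref{sec:existence} (Remark \ref{rem:lipschitz}, estimate \eqref{eq:lipschitz}), bounds $\|\nabla(u\circ T_t)-\nabla u^t\|_{\Ltwo}$ by $\|\chi_\Omega-\chi_\Omega\circ T_t^{-1}\|_{L^1(D)}$ via a change of variables, and concludes with the triangle inequality and Lemma \ref{shape-tools}; the Lipschitz rate $\|u^t-u\|\le ct$ is not proved but delegated to \cite{sturm}. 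You instead stay entirely on the fixed domain and treat \eqref{eq:for_u_t_trans_problem} as a $t$-perturbed strongly monotone problem: the frozen-$t$ comparison $a_t(u^t;\cdot)-a_t(u;\cdot)$ is coercive by Lemma \ref{lemma1} applied pointwise to $p=\BB(t)\nabla u^t$, $q=\BB(t)\nabla u$, and the perturbations $\ell_t-\ell_0$ and $a_t(u;\cdot)-a_0(u;\cdot)$ are controlled by the smoothness of $\xi(t),\BB(t)$ and by dominated convergence. Your approach buys two things: it gives a self-contained proof of the $O(t)$ rate (via the $\Lambda$-Lipschitz continuity of $\rho\mapsto\beta_i(|\rho|^2)\rho$ extracted from Assumption \ref{assump:beta}.\ref{assump:beta_4}), and it avoids relying on \eqref{eq:lipschitz}, which in the paper was only established under the additional $H^{1+\e}$ regularity hypothesis of the Proposition in Section \ref{sec:existence} — your perturbation comes from smooth coefficients rather than from a difference of characteristic functions, so no extra regularity of $u$ is needed. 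The paper's route, in exchange, reuses machinery already set up for the existence result. One small point to make explicit in a final write-up: the global Lipschitz bound $|D_\rho\Phi|\le\Lambda$ uses the symmetry of $\beta_i(|\rho|^2)I_2+2\beta_i'(|\rho|^2)\rho\otimes\rho$ together with Assumption \ref{assump:beta}.\ref{assump:beta_3} to justify the mean-value argument along segments; you state this, and it is correct.
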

\begin{proof} Since $\|\,\cdot\, \|_{H^1_0(D)}$ and 
the $\Ltwo$-norm of the gradient are equivalent norms on $H^1_0(D)$ it suffices to show
$\lim_{t\searrow 0} \|\nabla u^t-\nabla u\|_{\Ltwo(D,\R^d)} = 0 . $ 
First of all introduce 
$$A_t(x) :=\xi(t)(D\Phi_t(x))^{-T}(D\Phi_t(x))^{-1}$$
which satisfies $|A_t(x)^{-1}| \le c$ for all $t\in [0,\tau]$ and hence
\ben
\begin{split}
\forall x\in \Omega,\, \forall t\in [0,\tau],\,\forall \zeta\in \R^2:\,  |\zeta|^2 & \le |A_t(x)^{-1}| \,|\xi(t) (D\Phi_t(x))^{-T}\zeta\cdot (D\Phi_t(x))^{-T}\zeta|^2  \\
                                                            &  =   c A_t(x)\zeta\cdot \zeta. 
\end{split}
\een
Therefore for all $f_{0}\in H^1(D)$ and all $t\in[0,\tau]$
\benn
\int_D|\nabla (f_{0}\circ T_t^{-1})|^2\,dx=\int_D A_t \nabla f_{0}\cdot \nabla f_{0}\,dx \ge \frac{1}{c}\int_D|\nabla f_{0}|^2 \,dx.
\eenn 
Further, we get from this estimate that for all $t\in[0,\tau]$ 
 \ben\label{eq:int_D_f_phi_t}
c\|f_{0}\|_{H^1(D)}\le \|f_{0}\circ T_t^{-1}\|_{H^1(D)}.
\een
Now setting $\chi_1:=\chi_\Omega$ and $\chi_2:=\chi_{\Omega_t}=\chi_\Omega\circ T_t^{-1}$ and 
denoting the corresponding solutions of \eqref{weak_state} by $u:=u(\chi_\Omega)$ and $u_t:=u(\chi_{\Omega_t})$, we infer from 
\eqref{eq:lipschitz} and \eqref{eq:int_D_f_phi_t}
$$c\|\nabla(u\circ T_t)-\nabla u^t\|_{\Ltwo(D,\R^d)}\le \|\nabla u_t-\nabla u\|_{\Ltwo(D,\R^d)}\le C\|\chi_\Omega-\chi_\Omega\circ  T_t^{-1}\|_{L^1{(D)}},$$
where $u^t:=u_t\circ  T_t$. 
Employing the previous estimates, we get for all $t\in [0,\tau]$
\benn
\begin{split}
\|\nabla u-\nabla u^t\|_{\Ltwo(D,\R^d)}&\le \|\nabla u-\nabla (u\circ T_t)\|_{\Ltwo(D,\R^d)}+\|\nabla (u\circ T_t)-\nabla u^t\|_{\Ltwo(D,\R^d)}\\
                  &\le \tilde{c}\left(\|\nabla u-\nabla (u\circ T_t)\|_{\Ltwo(D,\R^d)}+\|\chi_\Omega-\chi_\Omega\circ  T_t^{-1}\|_{L^1{(D)}}\right).
\end{split}
\eenn
Finally, taking into account Lemma~\ref{shape-tools}, we obtain the desired continuity. 
The Lipschitz continuity under the additional Hypothesis~\ref{assump:beta}.\ref{assump:beta_4} was shown in \cite{sturm}.
\end{proof}
%
Using the previous lemma we are able to show the following.
\begin{lemma}
The solution $p^t$ of \eqref{eq:linearization_transm_in_u} converges weakly in $H^1_0(D)$ to the solution $p$ of the adjoint equation \eqref{adjoint}-\eqref{trans_cond_adjoint}.
\end{lemma}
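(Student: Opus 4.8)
The plan is to argue by weak compactness together with uniqueness of the adjoint solution. First I would establish a uniform bound on the family $p^t$ in $H^1_0(D)$. Testing the averaged adjoint equation \eqref{eq:linearization_transm_in_u2} with $\psi=p^t$, and using that $\BB(t)\to I_2$ and $\xi(t)\to 1$ uniformly as $t\searrow 0$ together with Assumption \ref{assump:beta}.\ref{assump:beta_4}, the left-hand side is uniformly coercive for $t$ small: there is $c>0$ with $\int_0^1\int_D \mathcal{A}(\BB(t)\nabla u^s_t)\BB(t)\nabla p^t\cdot \BB(t)\nabla p^t\,\xi(t)\,dx\,ds\ge c\,\|p^t\|_{H^1_0(D)}^2$. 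Estimating the right-hand side by the trace theorem on $\Gamma_0$ (where $T_t=\mathrm{id}$ since $\mathrm{supp}(V)\cap\Gamma_0=\emptyset$) gives $\|p^t\|_{H^1_0(D)}\le C$ uniformly in $t$. By reflexivity, any sequence $t_k\searrow 0$ admits a subsequence (not relabeled) with $p^{t_k}\rightharpoonup p^\star$ in $H^1_0(D)$.

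Second, I would identify the limits of the coefficients. By Lemma \ref{lem:mate_der_trans} we have $u^t\to u$ strongly in $H^1_0(D)$, hence $u^s_t=su^t+(1-s)u^0\to u$ strongly in $H^1_0(D)$ uniformly in $s\in[0,1]$; passing to a further subsequence yields $\nabla u^s_{t_k}\to\nabla u$ pointwise a.e.\ on $D\times(0,1)$. Combined with $\BB(t_k)\to I_2$ and the continuity of $\beta_i,\beta_i'$ (Assumption \ref{assump:beta}.\ref{assump:beta_3}), the matrix field $\mathcal{A}(\BB(t_k)\nabla u^s_{t_k})$ converges pointwise a.e.\ to $\mathcal{A}(\nabla u)$. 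The decisive point is that this matrix is uniformly bounded in $L^\infty$: the rank-one term $2\partial_\zeta\beta_i(|\zeta|^2)\,\zeta\otimes\zeta$ is controlled by $\beta_i'(|\zeta|^2)|\zeta|^2\le\Lambda$ from Assumption \ref{assump:beta}.\ref{assump:beta_4}, while the remaining term is bounded by Assumption \ref{assump:beta}.\ref{assump:beta_1}. Dominated convergence on $D\times(0,1)$ then upgrades the a.e.\ convergence to strong convergence $\mathcal{A}(\BB(t_k)\nabla u^s_{t_k})\BB(t_k)\nabla\psi\to\mathcal{A}(\nabla u)\nabla\psi$ in $\Ltwo(D\times(0,1))$ for each fixed $\psi\in H^1_0(D)$.

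Third, I would pass to the limit in \eqref{eq:linearization_transm_in_u2}. On the left, I pair the strongly convergent coefficient $\mathcal{A}(\BB(t_k)\nabla u^s_{t_k})\BB(t_k)\nabla\psi$ with the weakly convergent factor $\BB(t_k)\nabla p^{t_k}\rightharpoonup\nabla p^\star$ in $\Ltwo(D)$; the product of a strongly and a weakly $L^2$-convergent sequence passes to the limit, and the $s$-integral trivializes since the limit integrand is $s$-independent, giving $\int_D\mathcal{A}(\nabla u)\nabla p^\star\cdot\nabla\psi\,dx$. On the right, since $\xi(t_k)=1$ and $T_{t_k}=\mathrm{id}$ near $\Gamma_0$ and since $B_r$ is linear, $B_r(u^s_{t_k})=s\,B_r(u^{t_k})+(1-s)B_r(u)$, so it suffices that $B_r(u^{t_k})\to B_r(u)$ in $L^2(\Gamma_0)$; this follows from the strong convergence $u^{t_k}\to u$ and interior elliptic regularity in the (uniformly elliptic) air region surrounding $\Gamma_0$, where the material distribution is fixed and $u^{t_k}=u_{t_k}$. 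Hence the right-hand side tends to $-2\int_{\Gamma_0}(B_r(u)-B_d)B_r(\psi)\,ds$, and $p^\star$ solves the adjoint equation \eqref{adj_thrm}.

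Finally, Theorem \ref{thm:exAdjoint} guarantees that the adjoint solution is unique, so $p^\star=p$ independently of the extracted subsequence; by the usual subsequence principle the entire family converges, $p^t\rightharpoonup p$ in $H^1_0(D)$. I expect the main obstacle to be the passage to the limit in the nonlinear coefficient on the left-hand side: one must convert the mere pointwise convergence of $\mathcal{A}(\BB(t)\nabla u^s_t)$ into a strong $L^2$ convergence that can be paired with the only weakly convergent gradients $\nabla p^t$, and this hinges entirely on the uniform $L^\infty$ bound supplied by Assumption \ref{assump:beta}.\ref{assump:beta_4}; without it the term $\nabla u\otimes\nabla u$ would only be integrable and the limit argument would fail.
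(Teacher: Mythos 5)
Your proposal is correct and follows essentially the same route as the paper: a uniform $H^1_0(D)$ bound on $p^t$ obtained by testing the averaged adjoint equation with $\psi=p^t$, extraction of a weakly convergent subsequence, passage to the limit using the strong convergence $u^t\to u$ from Lemma~\ref{lem:mate_der_trans}, and identification of the limit via the uniqueness statement of Theorem~\ref{thm:exAdjoint}. You merely spell out the limit passage in more detail than the paper does (the $L^\infty$ bound on $\mathcal{A}$ from Assumption~\ref{assump:beta}.\ref{assump:beta_4}, dominated convergence, the strong-times-weak pairing, and the regularity of $u$ near $\Gamma_0$), all of which is consistent with, and a useful elaboration of, the paper's one-line argument.
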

\begin{proof}
The existence of a solution of \eqref{eq:linearization_transm_in_u} follows from Theorem \ref{thm:exAdjoint}. Inserting $\psi=p^t$ as test function 
in \eqref{eq:linearization_transm_in_u}, we see that the estimate $\|u^t\|_{H^1(D)}\le C$ implies $\|p^t\|_{H^1(D)}\le C$ for $t$ sufficient small. From the
boundedness, we infer that $(p^t)_{t\ge 0}$ converges weakly to some $w\in H^1_0(D)$. In Lemma~\ref{lem:mate_der_trans} we proved $u^t\rightarrow u$ in $H^1_0(D)$
which we can use to pass to the limit in \eqref{eq:linearization_transm_in_u} and obtain 
\benn
p^{t_k} \rightharpoonup p \text{ in } H^1_0(D) \text{ for } t_k\rightarrow 0 \text{ as }k\rightarrow \infty,
\eenn
where $p\in H^1_0(D)$ solves the adjoint equation \eqref{adjoint}-\eqref{trans_cond_adjoint}. By uniqueness we conclude $w=p$.
\end{proof}
Now we proceed to the differentiation of \eqref{lagrangean_non_repa} at $t>0$. 
Introduce the notations $\PP(t) =\xi(t)\BB(t) $ and $\QQ(t) : = \xi(t)\BB (t)^T\BB (t) $, we obtain
\ben\label{eq:volume_form_trans}
\begin{split}
 \partial_t \mathfrak{G}(t,\varphi,\psi) &= \int_{\Gamma_{0}} \xi_\tau'(t)|B_{r}(\varphi)-B_{d}\circ T_{t}|^2\;ds \\
& \hspace{-1cm} -2\int_{\Gamma_{0}} \xi_\tau(t) (B_{r}(\varphi)-B_{d}\circ T_{t}) \nabla B_d\circ T_t \cdot V   \;ds\\
& \hspace{-1cm}  +2\int_{D}  \partial_{\zeta}\beta_{\Omega}(x,|\BB (t)\nabla \varphi|^2)  (\BB '(t)\nabla \varphi\cdot \BB (t)\nabla \varphi)\BB (t)\nabla \varphi\cdot \BB (t)\nabla \psi \xi(t)\;dx\\
& \hspace{-1cm}  +\int_{D} \beta_{\Omega}(x,|\BB (t)\nabla \varphi|^2)\QQ'(t)\nabla \varphi\cdot \nabla \psi \;dx\\
& \hspace{-1cm}  -\int_{D} f_{0}\circ T_{t}\, \psi \xi'(t)  + \nabla f_{0}\circ T_{t}\cdot V\,  \psi \xi(t)\;dx\\
& \hspace{-1cm}  +\int_{\Oma}   \PP'(t) \nabla\psi \cdot M_1  \, dx + \int_{D\setminus\overline{\Oma}}   \PP'(t) \nabla\psi \cdot M_2   \, dx\\
\end{split}
\een
and this shows that for fixed $\varphi\in H^1_0(D)$ the mapping $(t,\psi)\mapsto \partial_t \mathfrak G(t,\varphi,\psi)$ is weakly continuous. 
This finishes proving that (H3) is satisfied.

Consequently, we may apply Theorem~\ref{thm:sturm} and obtain
$dJ(\Omega;V)=\partial_t \mathfrak{G}(0,u,p)$,
where $u\in H^1_0(D)$ solves the state equation \eqref{state}-\eqref{trans_cond_state} and $p\in H^1_0(D)$ is a solution of the adjoint equation \eqref{adjoint}-\eqref{trans_cond_adjoint}. 
In order to compute $\partial_t \mathfrak{G}(0,u,p)$, note that the integrals on $\Gamma_0$ vanish due to $V=0$ on $\Gamma_0$, $\BB'(0)= -DV^T$, $\PP'(0)= (\divv V) I_2-DV^T$, $\QQ'(0)= (\divv V) I_2-DV^T-DV$.

Finally we have proved formula \eqref{volform}. This concludes the proof of Theorem \ref{thm1}. \hfill $\blacksquare$

\section{Optimization of the rotor core} \label{sec:numerics}
In this section, we use the shape derivative derived in Theorem \ref{thm1} to determine the optimal design for the electric motor described in Section \ref{sec:problemFormulation}. Recall that the problem consists in finding the shape $\Omega\subset\Omega_f$ of the ferromagnetic subdomain of the electric motor depicted in Fig. \ref{fig1} which minimizes the cost functional 
\begin{align*}
	J(\Omega) = \int_{\Gamma_0} | B_r(u_{\Omega}) - B_d|^2 \mbox d s
\end{align*}
among all admissible shapes $\Omega \in \mathcal O$ where $\Gamma_0$ is a circle in the air gap, $B_r(u_{\Omega})$ denotes the radial component of the magnetic flux density $B = B(u_{\Omega})$ on $\Gamma_0$ and $B_d$ is a given sine curve, $B_d(\theta) = \frac{1}{2}\, \mbox{sin}(4 \, \theta)$ where $\theta$ denotes the angle in polar coordinates with origin at the center of the motor; see Fig. \ref{fig1}. Minimizing this functional leads to a reduction of the total harmonic distortion (THD; see \cite{binder2012elektrische,ChoiIzuiNishiwakiKawamotoNomura2012}) of the flux density which causes the rotor to rotate more smoothly. Here, $u_{\Omega}$ is the solution of the two-dimensional magnetostatic boundary value problem the weak form of which reads as follows:
\begin{align}
	\begin{aligned} \label{eq:state_weak}
		&\mbox{Find } u \in H^1_0(D) \mbox{ such that } \int_D \nu(|\nabla u|) \nabla u \cdot \nabla v \, \mbox d x = \langle f, v \rangle  \mbox{ for all } v \in H^1_0(D).
	\end{aligned}
\end{align}
Here, the right-hand side $f$ corresponds to the weak form of 
 \[
 f = f_0 + \operatorname{div}M\mbox{ with }M = M_1\chi_{\Omega_{\text{ma}}}(x)+ M_2\chi_{D\setminus \Omega_{\text{ma}}}(x)
 \]
as in Section \ref{sec:shapeDerivativeCostFunc}, where $f_0 = J_i$, $M_1 = -\frac{\nu}{\nu_0} \binom{-M_y}{M_x}$ and $M_2 = 0$. The vector $\binom{M_x}{M_y}$ denotes the permanent magnetization of the magnets. It is a constant vector in each of the magnet subdomains pointing in the directions indicated in Fig. \ref{fig1} and vanishes outside the magnet areas. Denoting $M^{\perp} = \frac{\nu}{\nu_0} \binom{-M_y}{M_x}$, the right hand side $f \in H^{-1}(D)$ reads
\begin{align}
	\langle f, v \rangle = \int_D J_i \, v \, + \, M^{\perp} \cdot \nabla v \, \mbox d x.
\end{align}
The function $J_i$ represents the impressed currents in the coil areas (light blue areas in Fig. \ref{fig1}) and is assumed to vanish in this special application, i.e., $J_i = 0$. 

We consider admissible shapes $\Omega\in\mathcal{O}$ as in \eqref{admissible} and in Section \ref{sec:shapeDerivativeCostFunc}. 
Furthermore,
\begin{align*}
	\nu(|\nabla u|) = \chi_{\Omega_f} \, \hat{\nu}(|\nabla u|) + \chi_{D \setminus \overline{\Omega_f}} \, \nu_0 
\end{align*}
 denotes the magnetic reluctivity composed of a nonlinear function $\hat{\nu}$ depending on the magnitude of the magnetic flux density $|B| = |\nabla u|$ inside the ferromagnetic material and a constant $\nu_0 = 10^7 / (4 \pi)$, which is expressed in the unit $m \, kg \, A^{-2} \, s^{-2}$, otherwise. The constant $\nu_0$ is the magnetic reluctivity of vacuum which is practically the same as that of air. The nonlinear function $\hat{\nu}$ is in practice obtained from measurements and is not available in a closed form. However, the physical properties of magnetic fields yield the following characteristics of $\hat{\nu}$:
 \renewcommand{\theenumi}{\roman{enumi}}%
\begin{enumerate}
	 \item[(i)] $\hat{\nu}$ is continuously differentiable on $(0, \infty)$,
	 \item[(ii)] $\exists \, m >0: \; \hat{\nu}(s) \geq m \quad \text{ for all } s \in \mathbb{R}_0^+$,
	 \item[(iii)] $\hat{\nu}(s) \leq \nu_0 \quad \text{ for all } s \in \mathbb{R}_0^+$,
	 \item[(iv)] $(\hat{\nu}(s)s)' = \hat{\nu}(s) + \hat{\nu}'(s)s \geq m>0$,
	 \item[(v)] $s \mapsto \hat{\nu}(s) s$ is strongly monotone with monotonicity constant $m$, i.e.,
	 \begin{align*}
		(\hat{\nu}(s)\, s - \hat{\nu}(t)\,t)\, (s-t) &\geq m (s-t)^2 \quad \text{ for all } s, t \geq 0,
	 \end{align*}
	 \item[(vi)] $s \mapsto \hat \nu(s) s$ is Lipschitz continuous with Lipschitz constant $\nu_0$, i.e.,
	 \begin{align*}
		|\hat{\nu}(s)\,s - \hat{\nu}(t)\,t)| &\leq \nu_0 |s-t| \text{ for all } \forall s, t \geq 0.
	 \end{align*}
\end{enumerate}
\renewcommand{\theenumi}{\arabic{enumi}}%
For more details on properties and practical realization of the function $\hat{\nu}$ from given measurement data, we refer the reader to \cite{Heise1994, JuettlerPechstein2006, Pechstein2004}.
\begin{figure}
	\begin{minipage}{7cm}
		\begin{tabular}{c}    
			\includegraphics[scale=0.65]{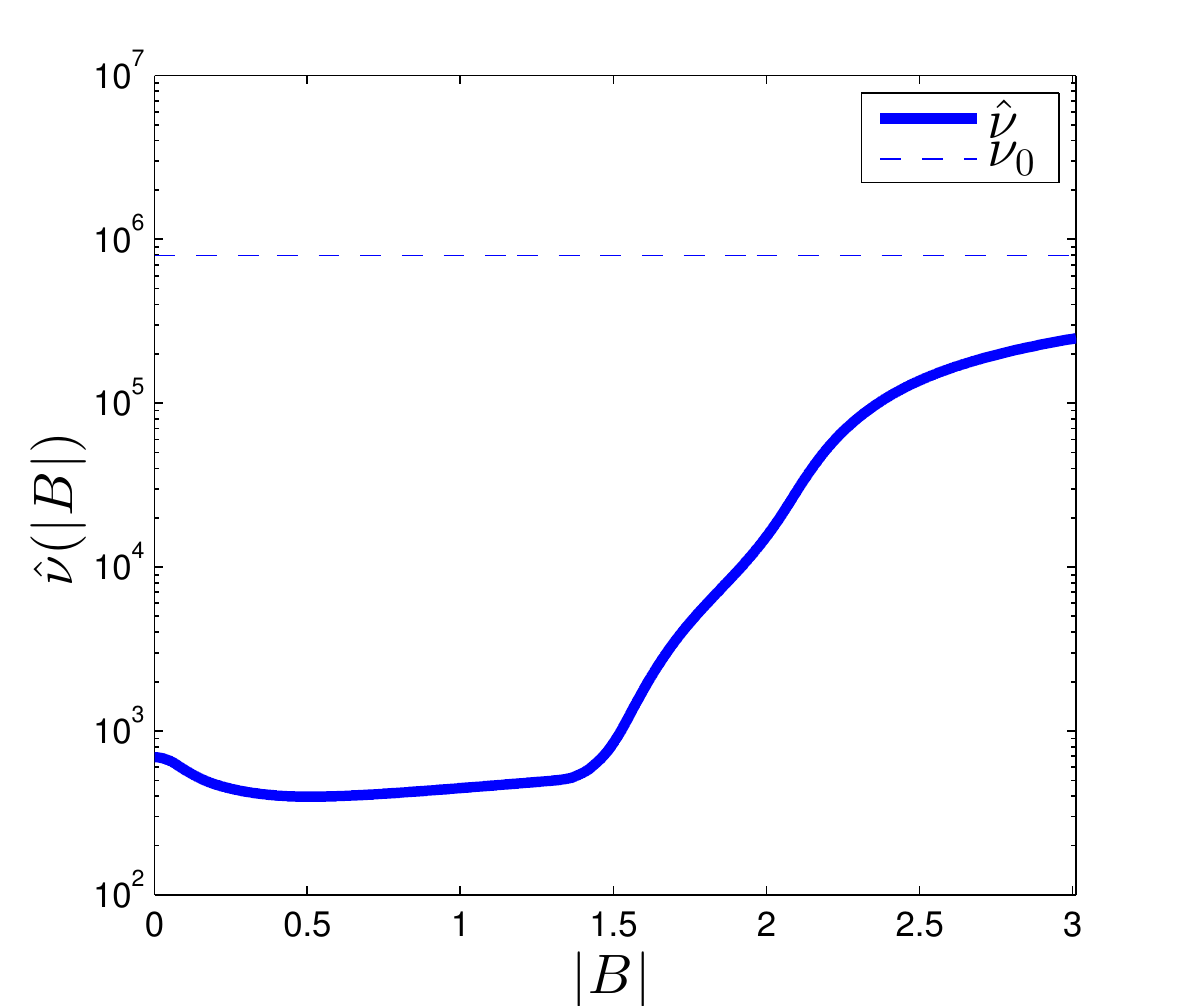} \\
			(a)
		\end{tabular}
	\end{minipage}
	\hspace{1.2cm}
	\begin{minipage}{7cm}
		\begin{tabular}{c}    
			\hspace{-10mm}\includegraphics[scale=0.4, trim=200 0 200 0, clip=true]{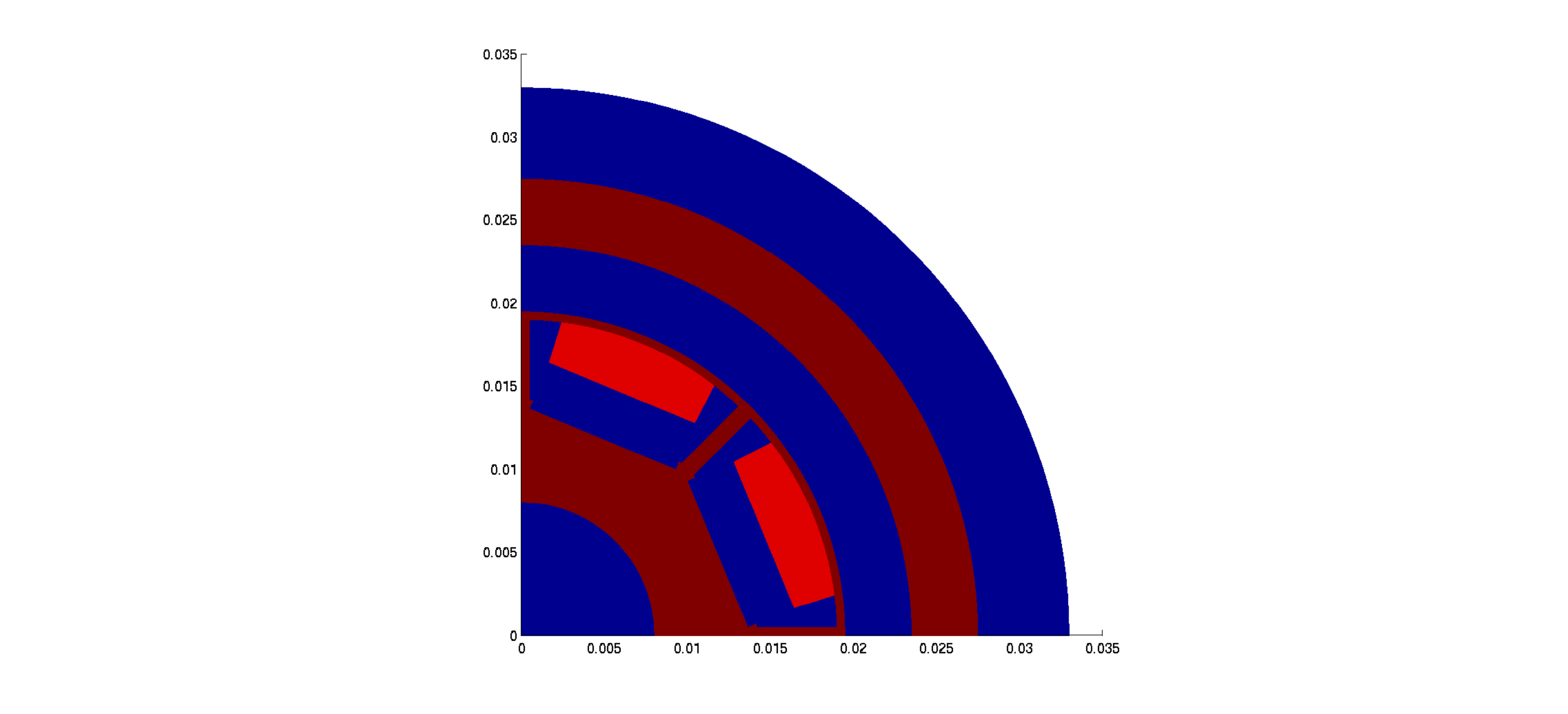}\\
			(b)
		\end{tabular}
	\end{minipage}
	\caption{(a) Magnetic reluctivity $\hat{\nu}$ as a function of the magnitude $|B|~=~|\nabla u|$ of the magnetic flux density. (b) Ferromagnetic material $\Omega_f$ with design subdomains $\Omega \subset \Omega_f$ (highlighted). }
	\label{fig:nu_designAreas}
\end{figure}

In order to be able to apply Theorem \ref{thm1} to the problem above, we have to check whether Assumption \ref{assump:beta} is satisfied for 
\begin{align*}
	\beta_1(\zeta) &:= \hat{\nu}(\sqrt{\zeta}) \mbox{ and } \beta_2(\zeta) := \nu_0.
\end{align*}
Clearly, all four conditions of Assumption \ref{assump:beta} are fulfilled for $\beta_2(\zeta) = \nu_0 = const$. Now let us investigate more closely $\beta_1$. Notice the relations $\beta_1( | \rho |^2) = \hat{\nu}(| \rho |)$ and $\beta_1'(| \rho |^2) = \hat{\nu}'(| \rho |) / (2| \rho |)$.
\begin{enumerate}
	\item As mentioned above, the function $\hat{\nu}$ is bounded from above by the magnetic reluctivity of vacuum $\nu_0$ and from below by a positive constant $m$, compare Fig. \ref{fig:nu_designAreas}(a).
	\item Assumption \ref{assump:beta}.\ref{assump:beta_2} holds for the function $\hat{\nu}$ by virtue of properties (v) and (vi).
	\item The numerical realization of the function $\hat{\nu}$ consists in an interproximation of given measurement data. The interproximation was done using splines of class $C^1$.
	\item It is easy to see that this condition for $\beta_1$ is equivalent to
		\begin{align*}
			\exists \lambda, \, \Lambda >0: \lambda | \eta |^2 \leq \eta^T \left( \beta_1(|\rho|^2) I_2 + 2\beta_1'(| \rho |^2) \rho \otimes \rho \right) \eta \leq \Lambda | \eta |^2,
		\end{align*}
		or in terms of $\hat{\nu}$,
		\begin{align*}
			\exists \lambda, \, \Lambda >0: \lambda | \eta |^2 \leq \eta^T \left( \hat{\nu}(|\rho|) I_2 + \frac{\hat{\nu}'(| \rho |)}{| \rho |} \rho \otimes \rho \right) \eta \leq \Lambda | \eta |^2,
		\end{align*}
		where $I_2$ denotes the identity matrix of dimension 2. The eigenvalues and corresponding eigenvectors of the $2 \times 2$ matrix $\hat{\nu}(|\rho|) I_2 + \frac{\hat{\nu}'(| \rho |)}{| \rho |} \rho \otimes \rho$ are given by
		\begin{align*}
			\lambda_1 &= \hat{\nu}(|\rho |) & v_1 &= \binom{-\rho_2}{ \rho_1}, \\
			\lambda_2 &= \hat{\nu}(|\rho |) + \hat{\nu}'(| \rho | )| \rho | &v_2 &= \binom{\rho_1}{\rho_2}.
		\end{align*}
		From the physical properties (ii) and (iv) of $\hat{\nu}$ it follows that both $\lambda_1$ and $\lambda_2$ are positive. Therefore, the assumption holds with $\lambda = \mbox{min}\lbrace \lambda_1, \lambda_2 \rbrace$ and $\Lambda = \mbox{max}\lbrace \lambda_1, \lambda_2 \rbrace$.
\end{enumerate}
Properties (v) and (vi) together with Assumption \ref{assump:beta}.\ref{assump:beta_1} yield existence and uniqueness of a solution $u\in H^1_0(D)$ to the state equation. Assumption \ref{assump:beta}.\ref{assump:beta_4} ensures the existence of an adjoint state $p\in H^1_0(D)$.

Thus we can apply Theorem \ref{thm1} and the shape derivative is given by \eqref{volform}.

\subsection{Numerical method}
In each iteration of the optimization process we use the shape derivative $dJ(\Omega; V)$ derived in \eqref{volform} to compute a vector field $V$ that ensures a decrease of the objective function $dJ(\Omega; V) \leq 0$ by displacing the interface between the iron subdomain $\Omega$ and the air subdomain $\Omr\setminus \Omega$ along that vector field.

\subsubsection{Setup of interface}
Due to practical restrictions we choose not to move the interface by moving the single points of the mesh, as it is common practice in shape optimization. Instead we model the interface by setting up a polygon with 151 points around each of the design subdomains $\Omega_k^{ref}$ (see Fig. \ref{fig:initInterface}) and move the points of these polygons along the calculated velocity field $V$ in the course of the optimization process. Each element of the design area whose center of gravity is inside this polygon is considered to be ferromagnetic material, the others are considered to be air. That way, we can avoid problems such as deformation of the fixed parts of the motor, i.e. magnets or the air gap, or self-intersections of the mesh.

\begin{figure}
	\begin{minipage}{7cm}
		\begin{tabular}{c} 
			\hspace{-20mm}\includegraphics[scale=0.4, trim=200 0 200 0, clip=true]{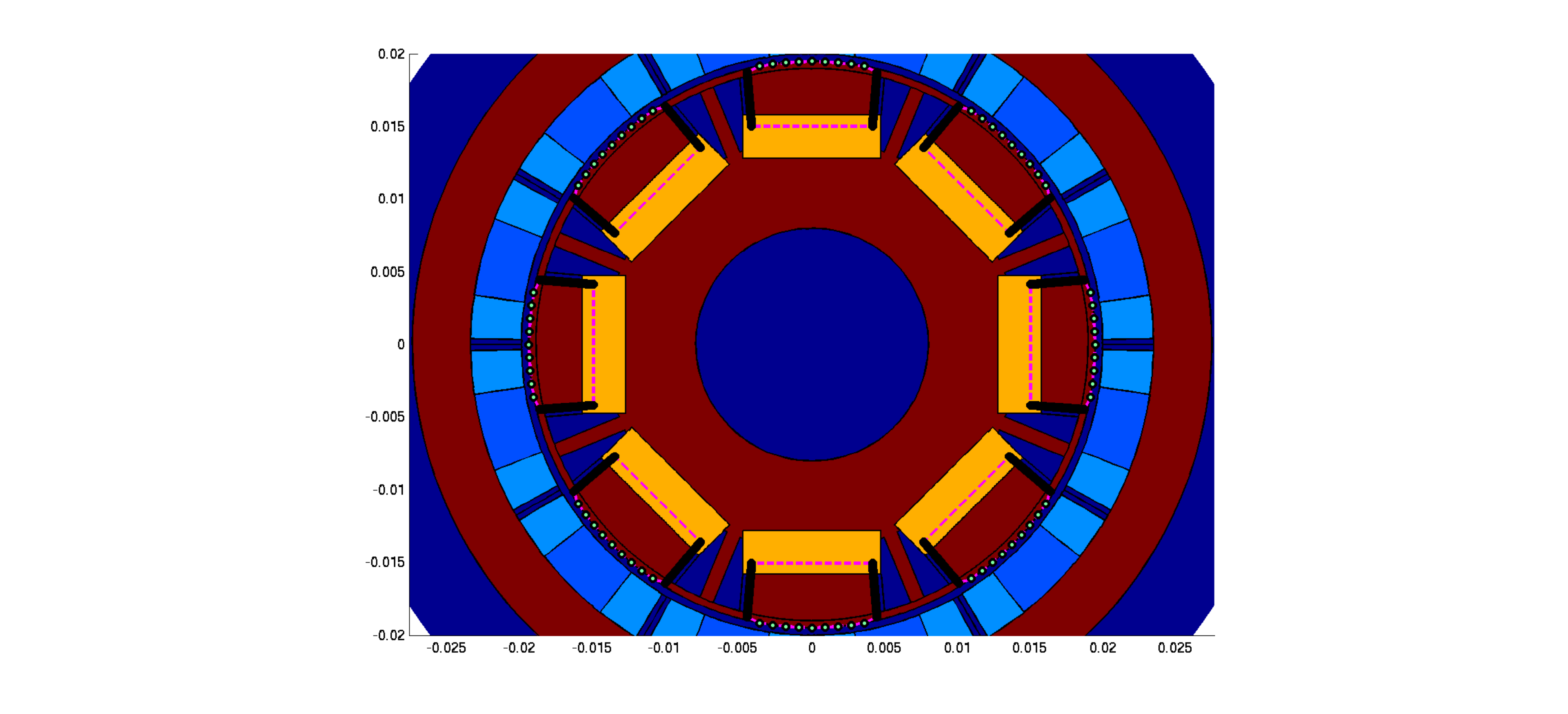} \\
			(a)
		\end{tabular}
	\end{minipage}
    \hspace{1.2cm}
	\begin{minipage}{7cm}
		\begin{tabular}{c} 
			\includegraphics[scale=0.36, trim=200 0 200 0, clip=true]{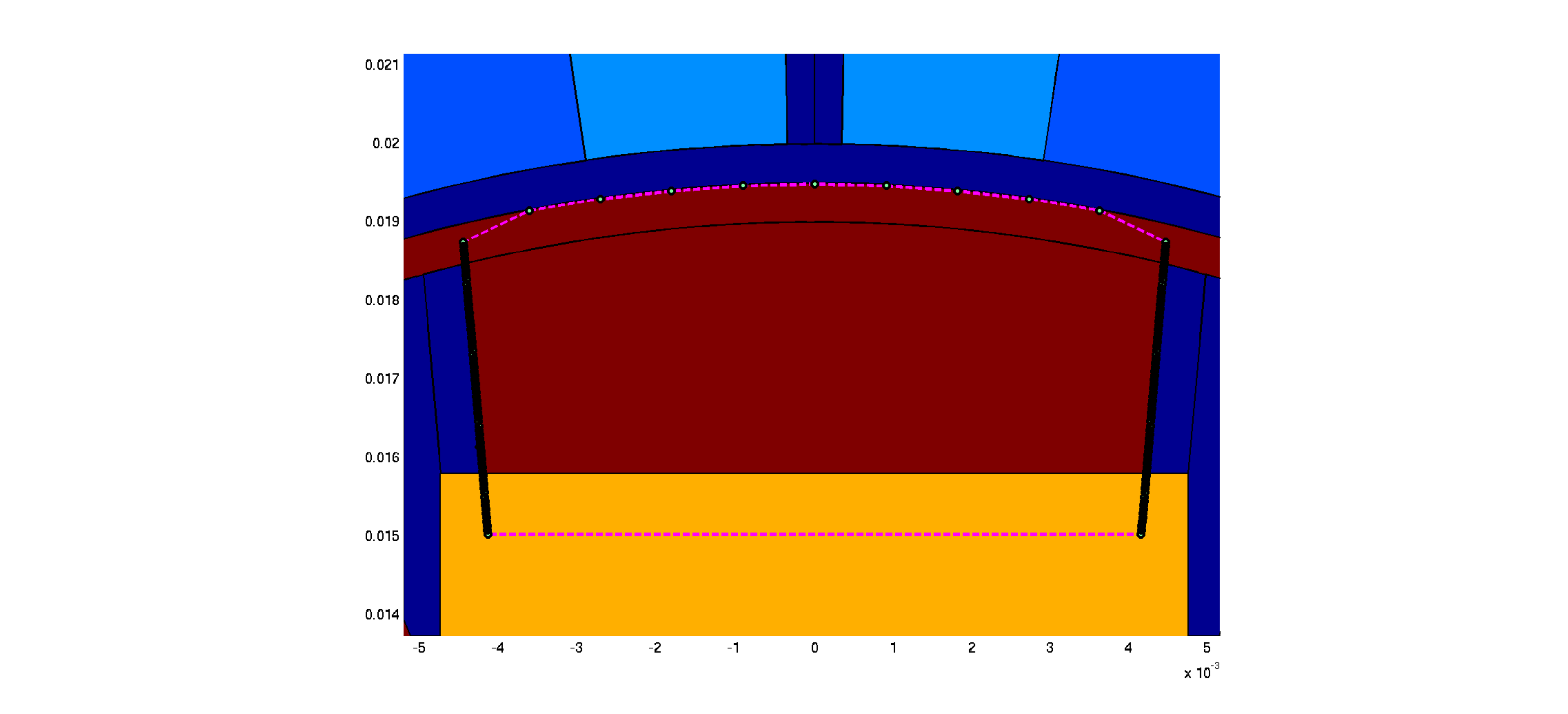}\\
			(b)
		\end{tabular} 
	\end{minipage}
	\caption{(a) Interface points around eight design areas of the electric motor. (b) Zoom on the upper design area: Fictitious interface polygon consists of 151 points (71 on the left, 71 on the right and 9 on top) }
	\label{fig:initInterface}
\end{figure}

\subsubsection{Descent Direction}
In order to get a descent in the cost functional, we compute the velocity field as follows. We choose a symmetric and positive definite bilinear form
\begin{equation*}
	b: H^1_0(D_{\text{rot}}) \times H^1_0(D_{\text{rot}})  \rightarrow \mathbb{R}
\end{equation*}
defined on the subdomain $D_{\text{rot}}$ of $D$ representing the rotor 
and compute $V$ as the solution of the variational problem: 
\begin{align} 
	&\mbox{Find } V \in P_h \mbox{ such that } b(V,W) = -dJ(\Omega, W) \mbox{ for all } \, W \in P_h,	\label{eq:bvp_V}
\end{align}
where $P_h\subset H^1_0(D_{\text{rot}})$ is a finite dimensional subspace. Outside $D_{\text{rot}}$ we extend $V$ by zero. Note that, by this choice, the condition $V=0$ on $\Gamma_0$, which is assumed in Section \ref{sec:shapeDerivativeCostFunc}, is satisfied.
The obtained descent directions $V \in P_h$ will also be in $W^{1,\infty}(D,\R^2)$ and, 
consequently, they are admissible vector fields defining a flow $T_t^V$.
The solution $V$ computed this way is a descent direction for the cost functional since
\begin{equation*}
	dJ(\Omega, V) = -b(V, V) \leq 0.
\end{equation*}
For our numerical experiments, we choose the bilinear form
\begin{align} \label{auxiliaryBVP}
	\begin{aligned}
	b:H^1_0(D_{\text{rot}}) \times H^1_0(D_{\text{rot}}) &\rightarrow \mathbb R \\
	b(V,W) &= \int_{D_{\text{rot}}} \left(\alpha \, \mbox D V : \mbox D W + V \cdot W \right) \, \mbox d x.
	\end{aligned}
\end{align}
Here, the penalization function $\alpha \in L^{\infty}(D_{\text{rot}})$ is chosen as 
\begin{equation*}
 	\alpha(x) = \left\lbrace \begin{array}{ll}
							1			&	x \in \Omega \\
							10			& 	x \in \Omega^{\varepsilon} \setminus \Omega \\
							10^{2}	&	\mbox{else,}
 	                    \end{array}
 	                    \right. 	
\end{equation*}
where $\Omega^{\varepsilon} = \lbrace x \in D_{\text{rot}}: \mbox{dist}(x, \Omega) \leq \varepsilon \rbrace$ for some small $\varepsilon >0$.
With this choice of $\alpha$, we ensure that the resulting velocity field $V$ is small outside the design region $\Omr$.

For all numerical simulations, we used piecewise linear finite elements on a triangular grid with 44810 degrees of freedom and 89454 elements where we chose a particularly fine discretization in the design regions $\Omr$ (53488 design elements). The nonlinear state equation \eqref{state} is solved by Newton's method. All arising linear systems of finite element equations are solved using the parallel direct solver PARDISO \cite{pardiso}.

\subsubsection{Updating the interface}
For updating the interface, we perform a backtracking (line search) algorithm:
Once a descent direction $V$ is computed, we move all interface points a step size $\tau = \tau_{init}$ in the direction given by $V$ and evaluate the cost function for the updated geometry. If the cost value has not decreased, the step size $\tau$ is halved and the cost function is evaluated for the new, updated geometry. We repeat this step until a decrease of the cost function has been achieved.
When the step size becomes too small such that no element switches its state, the algorithm is stopped.

\subsection{Numerical Results}
The procedure is summarized in Algorithm \ref{algo:shapeOptimization}:
\begin{algorithm} \label{algo:shapeOptimization} Set converged = false \\
	While !converged
	\begin{enumerate}
		\item Compute state $u$ using \eqref{state} and adjoint state $p$ using \eqref{adj}.
		\item Compute shape derivative $dJ(\Omega; V)$ from \eqref{volform}.
		\item Compute descent direction $V$ using \eqref{eq:bvp_V}.
		\item Find step width $\tau$ that yields a decrease in the cost function using backtracking.
		\item If decrease in the cost function could be achieved: 
		\item[]\hspace{10mm} Update interface and go to step 1.\\
		else:
		\item[]\hspace{10mm} converged = true.
	\end{enumerate}
\end{algorithm}
The final design after 35 iterations of Algorithm \ref{algo:shapeOptimization} can be seen in Fig. \ref{fig:finalResult}. The cost function is reduced from $1.3033*10^{-3}$ to $0.94997*10^{-3}$, i.e., by about 27\%. The radial component of the magnetic field on the circle $\Gamma_0$ in the air gap for the initial (blue), desired (green) and final design (red) can be seen in Fig. \ref{fig:Br_J}. The optimization process took about 26 minutes using a single core on a laptop.

\begin{figure} 
	\begin{minipage}{7cm}
		\begin{tabular}{c} 
			\hspace{-20mm}\includegraphics[scale=0.4, trim=200 0 200 0, clip=true]{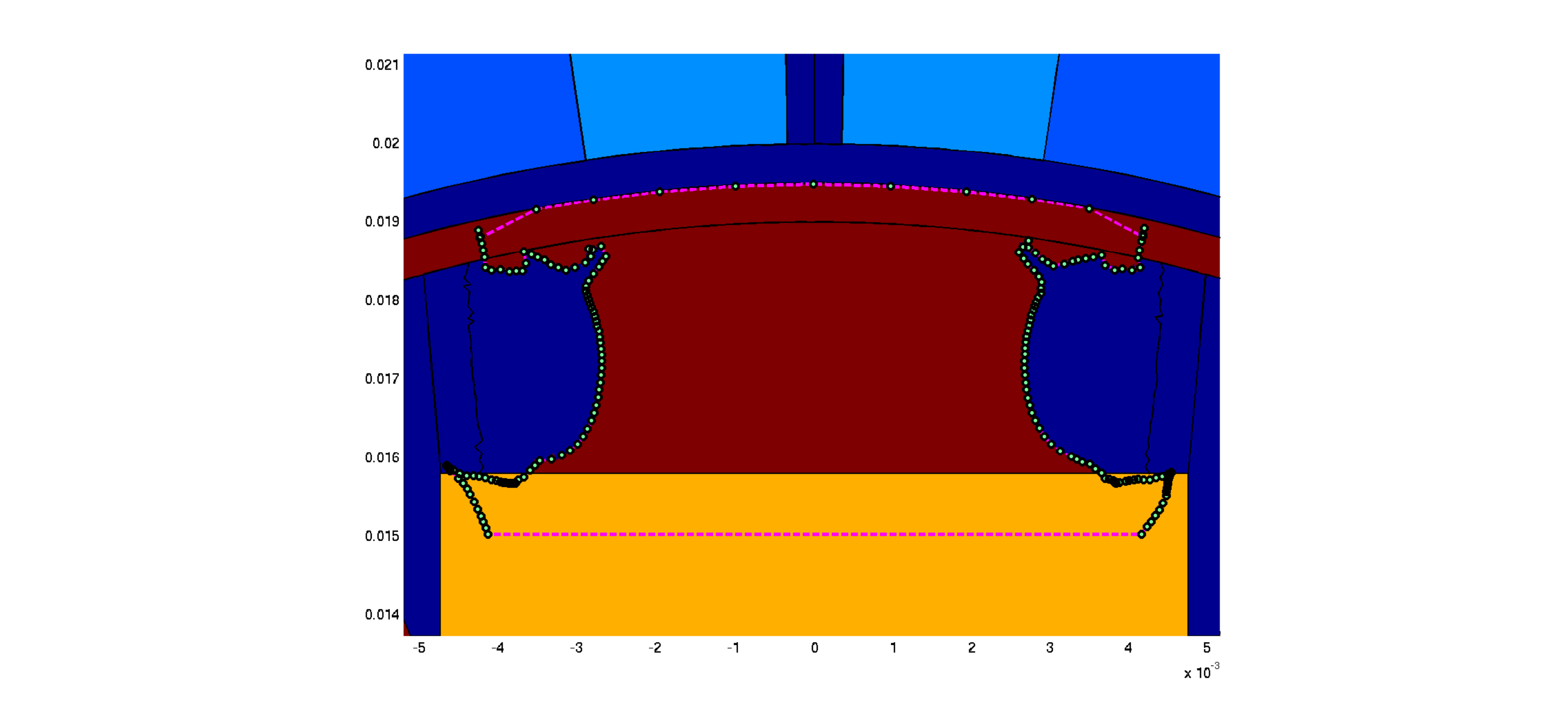} \\
			(a)
		\end{tabular}
	\end{minipage}
    \hspace{1.2cm}
	\begin{minipage}{7cm}
		\begin{tabular}{c} 
			\includegraphics[scale=0.43, trim=200 0 200 0, clip=true]{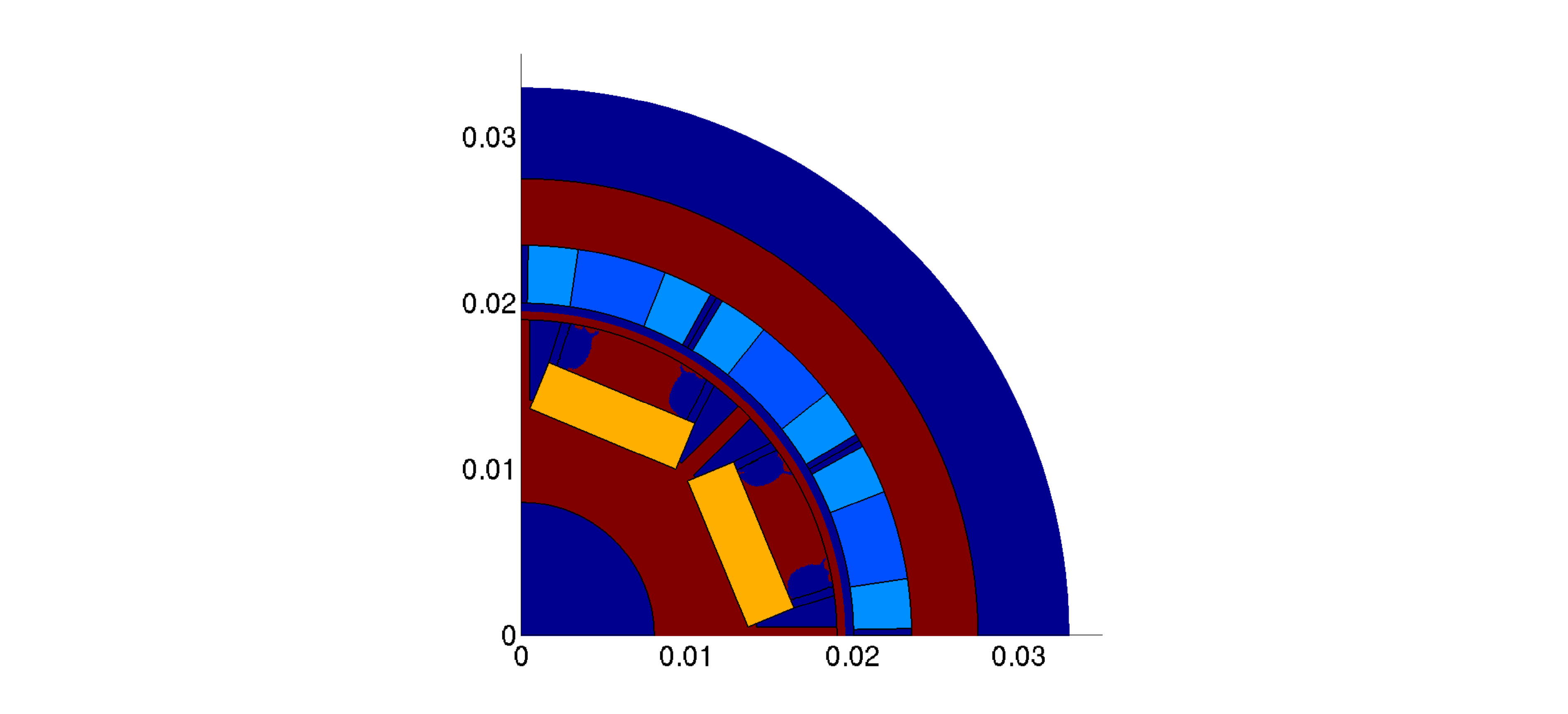}\\
			(b)
		\end{tabular} 
	\end{minipage}
	\caption{(a) Final design of one component $\Omega_k$ after 35 iterations of Algorithm \ref{algo:shapeOptimization}. (b) Upper-right quarter of the optimized motor.}
	\label{fig:finalResult}
\end{figure}

\begin{figure} 
	\begin{minipage}{7cm}
		\begin{tabular}{c} 
			\hspace{-5mm}\includegraphics[scale=0.6]{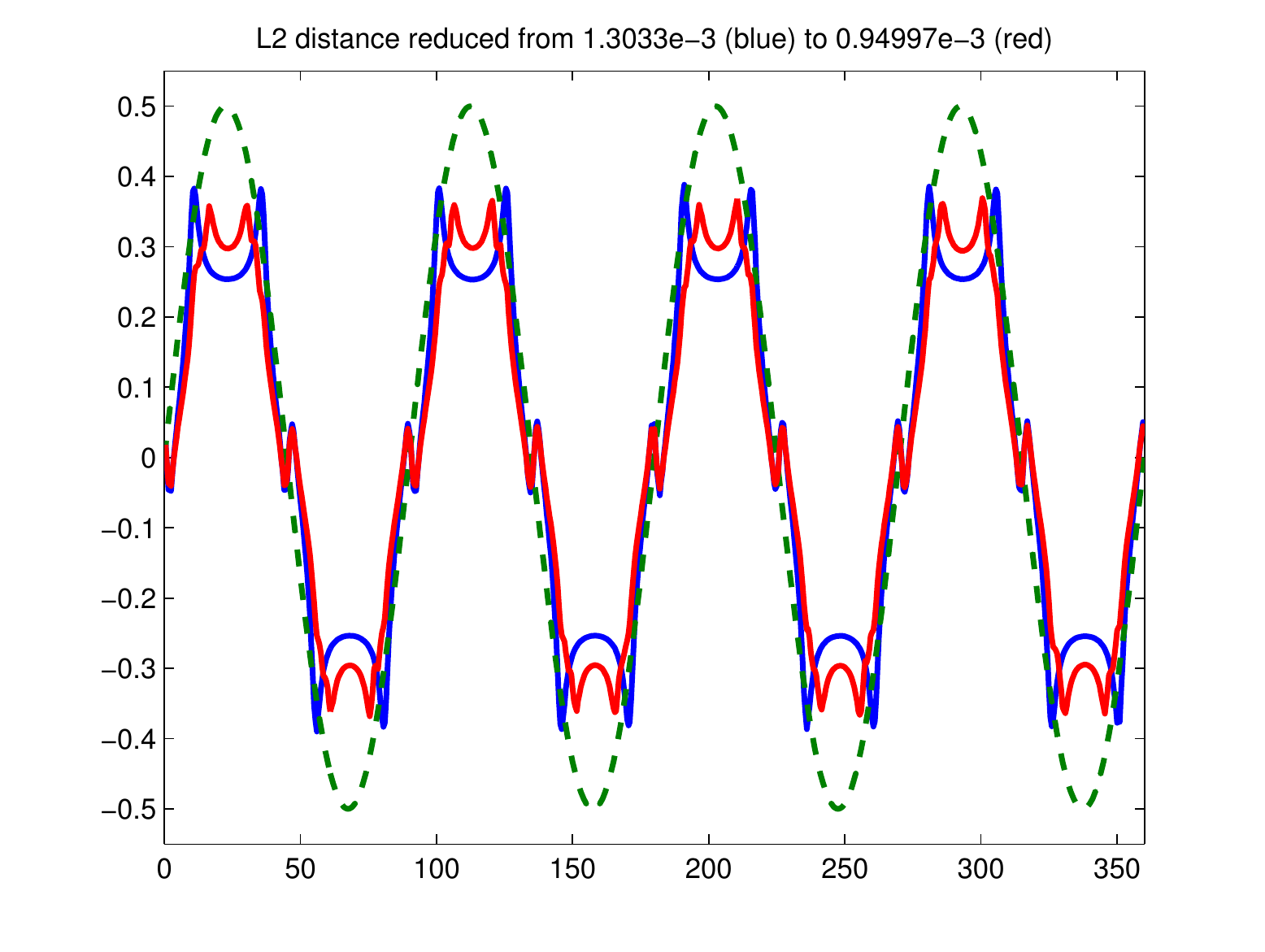} \\
			(a)
		\end{tabular} 
	\end{minipage}
	\hspace{1.2cm}
	\begin{minipage}{7cm}
		\begin{tabular}{c} 
			\includegraphics[scale=0.58]{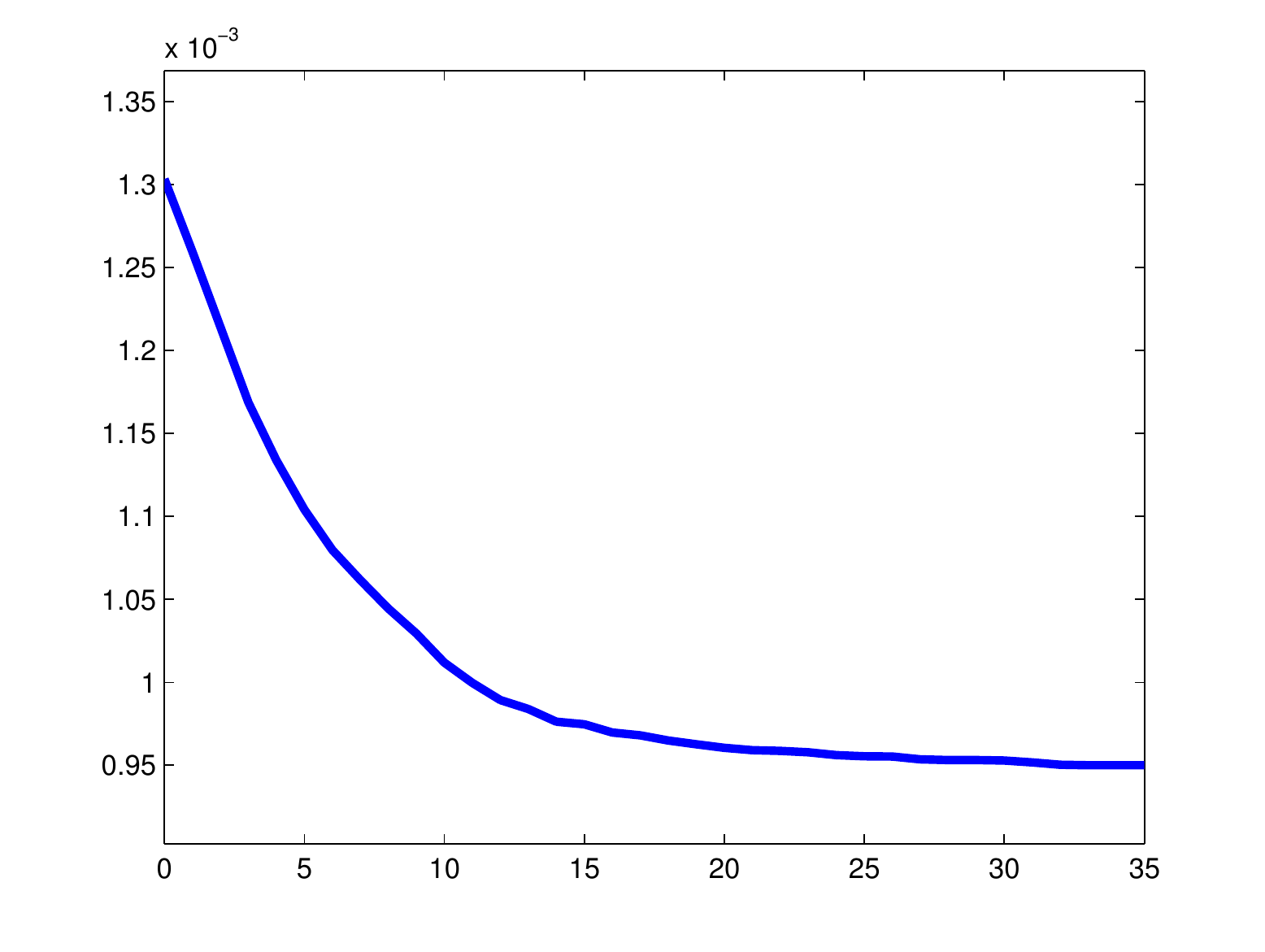} \\
			(b)
		\end{tabular} 
	\end{minipage}
	\caption{(a) Radial component of the magnetic flux density $B$ along curve $\Gamma_0$ in air gap: initial design (blue), desired sine curve $B_d$ (green), final design (red). (b) Decrease of cost functional in the course of the optimization process.}
	\label{fig:Br_J}
\end{figure}

\section{Conclusion}

In this paper, we have performed the rigorous analysis of the shape sensitivity analysis of a subregion $\Omega$ of the rotor of an electric motor in order to match a certain rotation pattern. The shape derivative of the cost functional was computed efficiently using a shape-Lagrangian method for nonlinear partial differential equation constraints which allows to bypass the computation of the material derivative of the state.
The implementation of the obtained shape derivative in a numerical algorithm provides an interesting shape which allows us to improve the rotation pattern. In the numerical experiment presented in this work, we chose a rather simple way of updating the interface: We just switched the state of single elements of the finite element mesh as discrete entities. For a more accurate resolution of the interface, one may employ a nonstandard finite element method like the extended Finite Element Method (XFEM) \cite{MoesDolbowBelytschko1999} or the immersed FEM \cite{LiLinWu2003}, or a discontinuous Galerkin 
approach based on Nitsche's idea, see \cite{HansboHansbo2002}. These approaches make it possible to represent an interface that is not aligned with the underlying FE discretization without loss of accuracy. An alternative way to achieve this would be to locally modify the finite element basis in a parametric way as it is done in \cite{FreiRichter2014}.\\

\noindent\textbf{Acknowledgments.} Antoine Laurain and Houcine Meftahi acknowledge financial support by the DFG Research Center Matheon ``Mathematics for key technologies'' through the MATHEON-Project C37 ``Shape/Topology optimization methods for inverse problems''. Peter Gangl and Ulrich Langer gratefully acknowledge the Austrian Science Fund (FWF) for the financial support of their work via the Doctoral Program DK W1214 (project DK4) on Computational Mathematics. They also thank the Linz Center of Mechatronics (LCM), which is a part of the COMET K2 program of the Austrian Government, for supporting their work on topology and shape optimization of electrical machines.
\bibliographystyle{abbrv}
\bibliography{biblio2}

\end{document}